\newcommand{\eps}{\epsilon}
\newtheorem{theorem}{Theorem}[section]
\newtheorem{proposition}[theorem]{Proposition}
\newtheorem{lemma}[theorem]{Lemma}
\newtheorem{definition}{Definition}[section]
\title{On the lifting of deterministic convergence rates for inverse problems with stochastic noise}
\author{Daniel Gerth\footnote{Corresponding author. Faculty of Mathematics, TU Chemnitz, 09107 Chemnitz, Germany. E-mail: daniel.gerth@mathematik.tu-chemnitz.de. Research supported in part by the Austrian Science Fund (FWF): W1214-N15 and by the German Research Foundation (DFG) under grant HO 1454/8-2.}, Andreas Hofinger\footnote{Johann Radon Institute, Altenbergerstrasse 69, 4040 Linz, Austria}, Ronny Ramlau\footnote{Industrial Mathematics Institue, JKU Linz, Altenbergerstrasse 69, 4040 Linz, Austria; Johann Radon Institute, Altenbergerstrasse 69, 4040 Linz, Austria}}
\begin{document}
\maketitle
\begin{abstract}
Both for the theoretical and practical treatment of Inverse Problems, the modeling of the noise is a crucial part. One either models the measurement via a deterministic worst-case error assumption or assumes a certain stochastic behavior of the noise. Although some connections between both models are known, the communities develop rather independently. In this paper we seek to bridge the gap between the deterministic and the stochastic approach and show convergence and convergence rates for Inverse Problems with stochastic noise by lifting the theory established in the deterministic setting into the stochastic one. This opens the wide field of deterministic regularization methods for stochastic problems without having to do an individual stochastic analysis for each problem.
\end{abstract}

In Inverse Problems, the model of the inevitable data noise is of utmost importance. In most cases, an additive noise model
\begin{equation}\label{eq:ad_noise}
y^\text{noisy}=y+\epsilon
\end{equation}
is assumed. In \eqref{eq:ad_noise}, $y\in\mathcal{Y}$ is the true data of the unknown $x\in\mathcal{X}$ under the action of the (in general) nonlinear operator $F:\mathcal{X}\rightarrow\mathcal{Y}$,
\begin{equation}\label{eq:problem}
F(x)=y,
\end{equation}
and $\epsilon$ in \eqref{eq:ad_noise} corresponds to the noise. The spaces $\mathcal{X},\mathcal{Y}$ are assumed to be Banach- or Hilbert spaces. When speaking of Inverse Problems, we assume that \eqref{eq:problem} is ill-posed. In particular this means that solving \eqref{eq:problem} for $x$ with noisy data \eqref{eq:ad_noise} is unstable in the sense that ``small'' errors in the data may lead to arbitrarily large errors in the solution. Hence, \eqref{eq:ad_noise} is not a sufficient description of the noise. More information is needed in order to compute solutions from the data in a stable way. In the \textit{deterministic} setting, one assumes 
\begin{equation}\label{eq:det_delta}
d_\mathcal{Y}(y,y^\delta)\leq\delta
\end{equation} 
for some $\delta>0$ where $d_\mathcal{Y}(\cdot,\cdot)$ is an appropriate distance functional. Typically, $d_\mathcal{Y}$ is induced by a norm such that \eqref{eq:det_delta} reads $||y-y^\delta||\leq \delta$. Here and further on we use the superscript $\cdot^\delta$ to indicate the deterministic setting. Solutions of \eqref{eq:problem} under the assumption \eqref{eq:ad_noise},\eqref{eq:det_delta} are often computed via a Tikhonov-type variational approach 
\begin{equation}\label{eq:functional}
x_\alpha^\delta=\min_{x\in\mathcal{D}(F)} d_\mathcal{Y}(F(x),y^\delta)+\alpha \Phi(x)
\end{equation}
where again $d_\mathcal{Y}$ is a distance function and $\Phi(\cdot)$ is the penalty term used to stabilize the problem and to incorporate a-priori knowledge into the solution. The regularization parameter $\alpha$ is used to balance between data misfit and the penalty and has to be chosen appropriately. The literature in the deterministic setting is rich, at this point we only refer to the monographs \cite{Louis,EHN1996,Hof86} for an overview.

 The deterministic worst-case error stands in contrast to \textit{stochastic} noise models where a certain distribution of the noise $\epsilon$ in \eqref{eq:ad_noise} is assumed. We shall indicate the stochastic setting by the superscript $\cdot^\eta$. In this paper, $\eta$ will be the parameter controlling the variance of the noise. Depending on the actual distribution of $\epsilon$, $d_\mathcal{Y}(y,y^\eta)$ may be arbitrarily large, but with low probability. A very popular approach to find a solution of \eqref{eq:problem} is the Bayesian method. For more detailed information, we refer to \cite{KaipioSomersalo,stuart_bayes,MoseSam,tarantola,CalSom}. In the Bayesian setting, the solution of the Inverse Problem is given as a distribution of the random variable of interest, the \textit{posterior distribution} $\pi_{post}$, determined by Bayes formula
\begin{equation}\label{eqn:bayes}
\pi_{post}(x|y^\eta)=\frac{\pi_{\epsilon}(y^\eta | x)\pi_{pr}(x)}{\pi_{y^\eta}(y^\eta)}.
\end{equation}
That is, roughly spoken, all values $x$ are assigned a probability of being a solution to \eqref{eq:problem} given the noisy data $y^\eta$. In \eqref{eqn:bayes}, the \textit{likelihood function} $\pi_{\epsilon}(y^\eta | x)$ represents the model for the measurement noise whereas the \textit{prior distribution} $\pi_{pr}(x)$ represents a-priori information about the unknown. The data distribution $\pi_{y^\eta}(y^\eta)$ as well as the normalization constants are usually neglected since they only influence the normalization of the posterior distribution. In practice one is often more interested in finding a single representation as solution instead of the distribution itself. Popular point estimates are the \textit{conditional expectation} (conditional mean, CM) 
\begin{equation}\label{eq:cm}
\mathbb{E}(\pi_{post}(x|y^\eta))=\int x \pi_{post}(x|y^\eta) dx
\end{equation}
and the \textit{maximum a-posteriori (MAP)} solution 
\begin{equation}\label{def:xmap}
x^{\mathrm{MAP}}=\underset{x}{\mathrm{argmax}}\quad \pi_{post}(x|y^\eta),
\end{equation}
i.e., the most likely value for $x$ under the prior distribution given the data $y^\eta$. Both point estimators are widely used. The computation of the CM-solution is often slow since it requires repeated sampling of stochastic quantities and the evaluation of high-dimensional integrals. The MAP-solution, however, essentially leads to a Tikhonov-type problem. Namely, assuming $\pi_{\epsilon}(y^\eta | x)\propto\exp(-d_\mathcal{Y}(F(x),y^\eta))$ and $\pi_{pr}(x)\propto\exp(-\alpha \Phi(x))$, one has
\begin{align}
x^{\mathrm{MAP}}&=\underset{x}{\mathrm{argmax}} \exp(-d_\mathcal{Y}(F(x),y^\eta))\exp(-\alpha \Phi(x))\nonumber\\
&=\underset{x}{\mathrm{argmin}}\,d_\mathcal{Y}(F(x),y^\eta)+\alpha \Phi(x)\nonumber
\end{align}
analogously to \eqref{eq:functional}.

 Also non-Bayesian approaches for Inverse Problems often seek to minimize a functional \eqref{eq:functional}, see e.g. \cite{HohWer12,BisHohMunk04} or use techniques known from deterministic theory such as filter methods \cite{BlanchardMathe,BisHoh07}. Finally, Inverse Problems appear in the context of statistics. Hence, the statistics community has developed methods to solve \eqref{eq:problem}, partly again based on the minimization of \eqref{eq:functional}. We refer to \cite{EvStark} for an overview.

In summary, Tikhonov-type functionals \eqref{eq:functional} and other deterministic methods frequently appear also in the stochastic setting. From a practical point of view, one would expect to be able to use deterministic regularization methods for \eqref{eq:problem} even when the noise is stochastic. Indeed, the main question for the actual computation of the solution, given a particular sample of noisy data $y^\eta$, is the choice of the regularization parameter. A second question, mostly coming from the deterministic point of view, is the one of convergence of the solutions when the noise approaches zero. In the stochastic setting these questions are answered often by a full stochastic analysis of the problem. In this paper we present a framework that allows to find appropriate regularization parameters, prove convergence of regularization methods and find convergence rates for Inverse Problems with a stochastic noise model by directly using existing results from the deterministic theory. 

The paper takes several ideas from the dissertation \cite{HOF06}, which is only publicly available as book \cite{Hof_diss_book} and not published elsewhere. It is organized as follows. In Section \ref{ssec:noisemodel} we discuss an issue occurring in the transition from deterministic to stochastic noise for infinite dimensional problems. The Ky Fan metric, which will be the main ingredient of our analysis, and its relation to the expectation will be introduced in Section \ref{ssec:kyfan}. We present our framework to lift convergence results from the deterministic setting into the stochastic setting in Section \ref{ssec:conv_stoch}. Examples for the lifting strategy are given in Section \ref{sec:examples}.

\section{On the noise model}\label{ssec:noisemodel}
Before addressing the convergence theory, we would like to discuss stochastic noise modeling and its intrinsic conflict with the deterministic model. Here and throughout the rest of the paper, assume 
\begin{equation}\label{def:probspace}
(\Omega,\mathcal{F},\mathbb{P})
\end{equation}
to be a complete probability space with a set $\Omega$ of outcomes of the stochastic event, $\mathcal{F}$ the corresponding $\sigma$-algebra and $\mathbb{P}$ a probability measure, $\mathbb{P}:(\Omega,\mathcal{F})\rightarrow[0,1]$. We restrict ourselves here to probability measures for the sake of simplicity. Extensions to more general measures are straightforward. In the Hilbert-space setting, the noise is typically modeled as follows, see for example \cite{LasSakSil09,Louis,BisHoh07,Hanne14}. Let $\xi:\Omega\rightarrow\mathcal{Y}$ be a stochastic process. Then for $y\in\mathcal{Y}$ 
\begin{equation}\label{eq:rv_def}
\langle y,\xi\rangle
\end{equation}
 defines a real-valued random variable. Assuming that 
 \begin{equation}
 \mathbb{E}(\langle \tilde{y},\xi\rangle^2)<\infty
 \end{equation}
for all $\tilde{y}\in\mathcal{Y}$ and that this expectation is continuous in $\tilde{y}$,
\[
\mathbb{E}(\langle \tilde{y},\xi\rangle\langle y,\xi\rangle)
\]
defines a continuous, symmetric nonlinear bilinearform. In particular, there exists the \textit{covariance operator} 
\[
\mathcal{C}:\mathcal{Y}\rightarrow\mathcal{Y}
\]
with
\[
\langle \mathcal{C} \tilde{y},y\rangle=\mathbb{E}(\langle \tilde{y},\xi\rangle\langle y,\xi\rangle).
\]

For the stochastic analysis of infinite dimensional problems via deterministic results, \eqref{eq:rv_def} is problematic. Namely, if $\{u_n\}_{n\in\mathbb{N}}$ is an orthonormal basis in $\mathcal{Y}$, the set $\{\langle u_n,\xi\rangle\}_{n\in\mathbb{N}}$ consists of infinitely many identically distributed random variables with $0<\mathbb{E}|\langle u_n,\xi\rangle|^2=\text{const}<\infty$ \cite{Louis}. Thus
\begin{equation}\label{eq:eerror}
\mathbb{E}\left(\sum_{n=1}^{\infty} |\langle u_n,\xi\rangle|^2\right)
\end{equation}
is almost surely infinite and a realization of the noise is an element of the Hilbert space $\mathcal{Y}$ with probability zero. Let us take the common example of Gaussian white noise which can be modeled via the above construction. Namely, with \[\mathbb{E}(\langle y,\xi\rangle)=0 \quad \forall y\in\mathcal{Y}\] and the covariance operator \[\mathcal{C}=\eta^2 I,\] where $I$ is the identity and $\eta>0$ the variance parameter, the Gaussian white noise is described \cite{Louis,Hanne14}. As consequence of (\ref{eq:eerror}) and explained for example in \cite{Hanne14}, a realization of such a Gaussian random variable is with probability zero an element of an infinite dimensional $L_2$-space. It is therefore inappropriate to use an $L_2$-norm for the residual in case of an infinite dimensional problem. Since in this case a realization of Gaussian white noise only lies (almost surely) in any Sobolev space $H^s$ with $s<-d/2$ where $d$ is the dimension of the domain, one should adjust the norm for the residual accordingly. Except for the paper \cite{Hanne14} this issue seems not to have been addressed in the literature. A main reason for this might be that for the practical solution of the Inverse Problem this is not a severe issue since in reality the measurements are finite dimensional and, in order to use a computer to solve the problem, a finite dimensional approximation of the unknown object has to be used. In this case the sum in \eqref{eq:eerror} is finite and the noise lies almost surely in the finite dimensional space. However, difficulties arise whenever one seeks to investigate convergence of the discretized problem to its underlying infinite dimensional problem. We will not address this issue and assume throughout the whole work that $\mathbb{E}||\epsilon||<\infty$ or use the slightly weaker bound on the Ky-Fan metric (see Section \ref{ssec:kyfan}). In order to handle the Ky Fan metric we need to be able to evaluate probabilities $\mathbb{P}(||y-y^\eta||>\varepsilon)$, $0\leq\varepsilon\leq1$, which is only meaningful if $y-y^\eta=:\epsilon\in \mathcal{Y}$. Assuming that $\mathcal{Y}$ is finite dimensional, then this is clear. For infinite dimensional problems, however, we have to assume that the noise is smooth enough for the sum in (\ref{eq:eerror}) to converge. Examples for this are Brownian noise ($1/f^2$-noise) or pink noise ($1/f$-noise), see e.g. \cite{gardner,kogan}. At this point we would also like to mention that as a consequence of our rather generic noise model we might not make use of some specific properties of the noise as would be possible when focusing on a particular  distribution of the noise. However, we are able to show convergence for a large variety of regularization methods.

\section{The Ky Fan metric}\label{ssec:kyfan}
In order to measure the magnitude of the stochastic noise and the quality of the reconstructions, we need metrics that incorporate the stochastic  nature of the problem. One such metric, which will be the the main tool for our stochastic convergence analysis, is the Ky Fan metric (cf. \cite{kyfan}). It is defined as follows.
\begin{definition}
Let $X_1$ and $X_2$ be random variables in a probability space $(\Omega,\mathcal{F},\mathbb{P})$ with values in a metric space $(\chi,d_\chi)$. The distance between $X_1$ and $X_2$ in the \textit{Ky Fan metric} is defined as
\begin{equation}\label{def:kyfan}
\rho_K(X_1,X_2):=\inf_{\varepsilon>0}\{\mathbb{P}(\{\omega\in\Omega:d_\chi(X_1(\omega),X_2(\omega))>\varepsilon\})<\varepsilon\}.
\end{equation}
\end{definition}
We will often drop the explicit reference to $\omega$. This metric essentially allows to lift results from a metric space to the space of random variables as the connection to the deterministic setting is inherent via the metric $d_\chi$ used in its definition. The deterministic metric is often induced by a norm $||\cdot||$. We will implicitly assume that equation \eqref{eq:problem} is scaled appropriately since $\rho_K(X_1,X_2)\leq1$ $\forall X_1,X_2$ by definition. Note that one can use definition \eqref{def:kyfan} also if $d_\mathcal{X}$ is a more general distance function than a metric. Then the construction \eqref{def:kyfan} itself is no longer a metric, however, the techniques used in later parts of the paper can readily be expanded to this setting.

 An immediate consequence of (\ref{def:kyfan}) is that $\rho_K(X_1,X_2)=0$ if and only if $X_1=X_2$ almost surely. Convergence in the Ky Fan metric is equivalent to convergence in probability, i.e., for a sequence $\{X_k\}_{k\in\mathbb{N}}\in\mathcal{X}$ and $X\in\mathcal{X}$ one has
\begin{equation*}
\rho_K(X_k,X)\overset{k\rightarrow\infty}{\longrightarrow} 0\quad\Leftrightarrow\quad \forall \varepsilon>0:\quad \mathbb{P}(d_\mathcal{X}(X_k,X)>\varepsilon)\overset{k\rightarrow\infty}{\longrightarrow}0.
\end{equation*}
Hence convergence in the Ky Fan metric also leads to pointwise (almost sure) convergence of certain subsequences in the metric $d_\chi$ \cite{DUD1989}.

A somewhat more intuitive and more frequently used stochastic metric is the expectation, or more general, a (stochastic) $L_p$ metric. For random variables $Y_1$ and $Y_2$ with values in a metric space $(\chi,d_\mathcal{Y})$,
\[
\mathbb{E}(d_\mathcal{Y}(Y_1,Y_2)^p)=\int_\Omega d_\mathcal{Y}(Y_1,Y_2)^p d\mathbb{P}(\omega)
\]
defines the $p$-th moment of $d_\mathcal{Y}(Y_1,Y_2)$ for $p\geq 1$, assuming the existence of the integral. We will use $p=1$ and refer to it as \textit{convergence in expectation}. Note that since the variance is defined as
\[
\text{Var}(d_\mathcal{Y}(Y_1,Y_2))=\mathbb{E}(d_\mathcal{Y}(Y_1,Y_2)^2)-E(d_\mathcal{Y}(Y_1,Y_2))^2\geq0
\]
one always has 
\begin{equation}\label{eq:expleqexpsq}
\mathbb{E}(d_\mathcal{Y}(Y_1,Y_2))\leq\sqrt{\mathbb{E}(d_\mathcal{Y}(Y_1,Y_2)^2)}.
\end{equation}
We will show later that for parameter choice rules the expectation of the noise has to be slightly overestimated, hence estimating $\mathbb{E}(d_\mathcal{Y}(y,y^\eta))$ via the popular and often easier to compute $L_2$-norm $E(d_\mathcal{Y}(y,y^\eta)^2)$ with \eqref{eq:expleqexpsq} is not problematic.

While the main part of our analysis is based on the description of the noise and the reconstruction quality in the Ky Fan metric, we will also allow the expectation as measure of the stochastic noise and partially show convergence of the reconstructed solutions in expectation. To this end, we comment in the following on the connection between those two metrics.

It is well-known that convergence in expectation implies convergence in probability, see for example \cite{DUD1989}. Hence, convergence in the Ky Fan metric is implied by convergence in expectation (and also by convergence of higher moments). Namely, with Markovs inequality one has, for an arbitrary nonnegative random variable $X$ with $E(X)<\infty$ and $C>0$
\begin{equation}\label{eq:markov}
\mathbb{P}(X>C)\leq \frac{\mathbb{E}(X)}{C}.
\end{equation}
Under an additional assumption one can show conversely that convergence in probability implies convergence in expectation. We have the following definition.
\begin{definition}[\cite{bogachev}, Definition A.3.1.]\label{def:uniform_int}
Let $(\Omega,\mathcal{F},\mathbb{P})$ be a complete probability space. A family $\mathcal{G}\subset L_1(\mathbb{P})$ is called \textit{uniformly integrable} if
\[
\lim_{C\rightarrow\infty}\sup_{x\in\mathcal{G}}\int_{|x|>C}|x(t)|\mathbb{P}(dt)=0
\]
\end{definition}
\begin{theorem}[\cite{bogachev}, Theorem A.3.2.]\label{thm:uniform_int}
Let $\{x_k\}_{k\in\mathbb{N}}\subset L_1(\mathbb{P})$ be a sequence convergent almost everywhere (or in probability) to a function $x$. If the sequence $\{x_k\}_{k\in\mathbb{N}}$ is uniformly integrable, then it converges to $x$ in the norm of $L_1(\mathbb{P})$.
\end{theorem}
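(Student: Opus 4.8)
The plan is to recognize this as the classical Vitali convergence theorem and, as a first reduction, to treat only convergence in probability. On the finite measure space $(\Omega,\mathcal{F},\mathbb{P})$ almost everywhere convergence already implies convergence in probability, so the a.e.\ case follows once the in-probability case is settled. I would therefore assume $x_k\to x$ in probability throughout and aim directly for $\mathbb{E}(|x_k-x|)\to 0$.

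First I would extract from Definition \ref{def:uniform_int} the two standard consequences of uniform integrability that do the real work. Choosing $C$ so large that $\sup_k\int_{|x_k|>C}|x_k|\,d\mathbb{P}<1$ and splitting $\int|x_k|\,d\mathbb{P}=\int_{|x_k|\leq C}|x_k|\,d\mathbb{P}+\int_{|x_k|>C}|x_k|\,d\mathbb{P}\leq C+1$ gives the uniform bound $\sup_k\|x_k\|_{L_1(\mathbb{P})}<\infty$. Applying the same splitting over an arbitrary measurable set $A$, via $\int_A|x_k|\,d\mathbb{P}\leq C\,\mathbb{P}(A)+\sup_k\int_{|x_k|>C}|x_k|\,d\mathbb{P}$, yields \emph{uniform absolute continuity}: for every $\varepsilon>0$ there is $\delta>0$ with $\mathbb{P}(A)<\delta\Rightarrow\sup_k\int_A|x_k|\,d\mathbb{P}<\varepsilon$.

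Next I would verify that the limit is integrable, $x\in L_1(\mathbb{P})$. Since convergence in probability yields a subsequence $\{x_{k_j}\}$ converging almost everywhere, Fatou's lemma along that subsequence gives $\int|x|\,d\mathbb{P}\leq\liminf_j\int|x_{k_j}|\,d\mathbb{P}\leq\sup_k\|x_k\|_{L_1(\mathbb{P})}<\infty$. With $x\in L_1(\mathbb{P})$ the singleton $\{x\}$ is uniformly absolutely continuous, and since $|x_k-x|\leq|x_k|+|x|$, the family $\{x_k-x\}$ inherits both the uniform $L_1$-bound and the uniform absolute continuity established above.

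The core estimate then runs as follows. Fixing $\varepsilon>0$, I pick $\delta$ from the uniform absolute continuity of $\{x_k-x\}$ and split over the ``good'' and ``bad'' sets cut out by the convergence. Writing $A_k:=\{|x_k-x|>\varepsilon/4\}$,
\[
\int_\Omega|x_k-x|\,d\mathbb{P}=\int_{\Omega\setminus A_k}|x_k-x|\,d\mathbb{P}+\int_{A_k}|x_k-x|\,d\mathbb{P}\leq\frac{\varepsilon}{4}\,\mathbb{P}(\Omega)+\int_{A_k}|x_k-x|\,d\mathbb{P},
\]
and convergence in probability forces $\mathbb{P}(A_k)\to0$, so $\mathbb{P}(A_k)<\delta$ for large $k$ and the second term drops below $\varepsilon/2$. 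Hence $\limsup_k\mathbb{E}(|x_k-x|)\leq\varepsilon$, and letting $\varepsilon\to0$ concludes. I expect the only genuine subtlety to be organizing the argument so as to avoid Egorov's theorem in the merely-in-probability case: the key is that uniform absolute continuity already controls the integral over the small-probability set $A_k$ directly, so no uniform convergence is needed. The one other point requiring care is that establishing $x\in L_1(\mathbb{P})$ must go through a subsequence, since Fatou's lemma needs pointwise, not in-probability, convergence.
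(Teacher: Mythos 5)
Your proof is correct. Note, however, that the paper offers no proof of this statement at all: it is imported verbatim as Theorem A.3.2 of Bogachev's \emph{Gaussian measures} and used as a black box, so there is no in-paper argument to compare yours against. What you have written is the standard proof of the Vitali convergence theorem, and all the steps check out: the reduction of the a.e.\ case to the in-probability case is valid on a probability space; the two consequences of uniform integrability (the uniform $L_1$ bound and uniform absolute continuity) are derived correctly from Definition \ref{def:uniform_int}; the integrability of the limit $x$ via Fatou along an a.e.-convergent subsequence is the right way to handle the fact that the hypothesis is only convergence in probability; and the final split over $A_k=\{|x_k-x|>\varepsilon/4\}$, with uniform absolute continuity controlling the integral over the small set $A_k$, closes the argument without any appeal to Egorov's theorem. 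The one point you flagged as a potential subtlety --- that $x\in L_1(\mathbb{P})$ must be obtained through a subsequence --- is indeed the only place where care is needed, and you handled it properly.
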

From a practical point of view, uniform integrability of a sequence of regularized solutions to an Inverse Problem is a rather natural condition. Since Inverse Problems typically arise from some real-world application, it is to be expected that the true solution is bounded. For example, in Computer Tomography, the density of the tissue inside the body cannot be arbitrarily high. Although for an Inverse Problem with a stochastic noise model, boundedness of the regularized solutions can not be guaranteed due to the possibly huge measurement error, one can enforce the condition from a priori knowledge of the solution.

Assume that the true solution $x^\dag$ fulfills $||x^\dag||\leq\varrho$ and $|x^\dag|\leq C$ globally for some fixed $\varrho,C>0$. Under this assumption, let $\{x_k^{\eta(k)}\}_{k\in\mathbb{N}}$ be a sequence of regularized solution for noisy data with variance $\eta(k)\overset{k\rightarrow\infty}{\rightarrow}0$. Let $C_1,C_2>1$ and define
\begin{equation}\label{eq:sol_exp_conv_bound}
\tilde{x}_k^\eta:=\begin{cases} x_k^\eta, & ||x_k^\eta||\leq C_1\varrho,|x_k^\eta|\leq C_2C\\ 0, & \textup{otherwise} \end{cases}.
\end{equation}
Then the sequence $\{\tilde{x}_k^\eta\}_{k\in\mathbb{N}}$ is uniformly integrable. In other words, by discarding solutions that must be far away from the true solution in regard of a priori knowledge, convergence in the Ky Fan metric implies convergence in expectation.

To close this section, let us remark on the computation of the Ky Fan distance. It can be estimated via the moments of the noise.
\begin{theorem}
Let $Y_1,Y_2$ be random variables in a complete probability space $(\Omega,\mathcal{F},\mathbb{P})$ and $\mathbb{E}(d_\mathcal{Y}(Y_1,Y_2)^s)<\infty$ for some $s\in\mathbb{N}$. Then
\begin{equation}\label{eq:kyfan_bound_exp}
\rho_K(Y_1,Y_2)\leq \sqrt[s+1]{\mathbb{E}(d_\mathcal{Y}(Y_1,Y_2)^s)}
\end{equation}
\end{theorem}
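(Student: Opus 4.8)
The plan is to reduce everything to a single nonnegative real-valued random variable and then play Markov's inequality \eqref{eq:markov} against the defining condition of the Ky Fan metric \eqref{def:kyfan}. I would write $X := d_\mathcal{Y}(Y_1,Y_2) \geq 0$ and set $m := \mathbb{E}(X^s) = \mathbb{E}(d_\mathcal{Y}(Y_1,Y_2)^s)$, which is finite by hypothesis. If $m = 0$ then $X = 0$ almost surely, so $\rho_K(Y_1,Y_2) = 0$ and the claimed inequality holds trivially; hence I may assume $m > 0$ in what follows.

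The key step is to apply Markov's inequality to the nonnegative random variable $X^s$ rather than to $X$ directly. For any $\varepsilon > 0$, using $\{X > \varepsilon\} = \{X^s > \varepsilon^s\}$ together with \eqref{eq:markov}, I obtain
\[
\mathbb{P}(X > \varepsilon) = \mathbb{P}(X^s > \varepsilon^s) \leq \frac{\mathbb{E}(X^s)}{\varepsilon^s} = \frac{m}{\varepsilon^s}.
\]
I then want to single out those levels $\varepsilon$ for which this tail bound is itself strictly below $\varepsilon$, i.e.\ $m/\varepsilon^s < \varepsilon$. Rearranging, this is equivalent to $\varepsilon^{s+1} > m$, that is $\varepsilon > m^{1/(s+1)} = \sqrt[s+1]{\mathbb{E}(d_\mathcal{Y}(Y_1,Y_2)^s)}$.

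Combining the two estimates, for every $\varepsilon > m^{1/(s+1)}$ one has $\mathbb{P}(X > \varepsilon) \leq m/\varepsilon^s < \varepsilon$, so each such $\varepsilon$ belongs to the set over which the infimum in \eqref{def:kyfan} is taken. Consequently the infimum is no larger than the threshold $m^{1/(s+1)}$, which is precisely the assertion $\rho_K(Y_1,Y_2) \leq \sqrt[s+1]{\mathbb{E}(d_\mathcal{Y}(Y_1,Y_2)^s)}$.

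The computation is short, and the only point needing care is the strict inequality in \eqref{def:kyfan}: I do not claim that the threshold $m^{1/(s+1)}$ itself satisfies the defining condition, only that every $\varepsilon$ strictly above it does, which already forces the infimum down to the threshold. The one genuinely delicate step is the exponent bookkeeping — applying Markov to the $s$-th power so that the $\varepsilon$ coming from the tail and the $\varepsilon$ coming from the Ky Fan level combine into the $(s+1)$-st root. An off-by-one in that power would yield the wrong exponent, so that is the step I would verify most carefully.
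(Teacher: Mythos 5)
Your proof is correct and follows essentially the same route as the paper: apply Markov's inequality to the $s$-th power of $d_\mathcal{Y}(Y_1,Y_2)$ to get the tail bound $m/\varepsilon^s$, then balance this against the Ky Fan level $\varepsilon$ to arrive at the $(s+1)$-st root. Your additional care with the strict inequality in the infimum and the degenerate case $m=0$ is a minor refinement of the paper's terser "solve $C = m/C^s$" step, not a different argument.
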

\begin{proof}
One has, due to Markov's inequality \eqref{eq:markov} and the monotonicity of the mapping $z\mapsto z^s$ for $z\geq 0$,
\[
\mathbb{P}(d_\mathcal{Y}(Y_1,Y_2)>C)=\mathbb{P}(d_\mathcal{Y}(Y_1,Y_2)^s>C^s)\leq \frac{\mathbb{E}(d_\mathcal{Y}(Y_1,Y_2)^s)}{C^s}
\]
for $C\geq 0$. Solving $C=\frac{\mathbb{E}(d_\mathcal{Y}(Y_1,Y_2)^s)}{C^s}$ for $C$ yields the claim.
\end{proof}
Note that even if moments exist for all $s\in\mathbb{N}$
\[
\lim_{s\rightarrow\infty} \sqrt[s+1]{\mathbb{E}(d_\mathcal{Y}(Y_1,Y_2)^s)}\neq \mathbb{E}(d_\mathcal{Y}(Y_1,Y_2)),
\]
see \cite{HOF06,diss}, due to the tail of the distributions. In the Gaussian case, a direct estimate has been derived in \cite{NeuPik,HofPik07}. We present it in Proposition \ref{prop:kyfandistance}.

\section{Convergence in the stochastic setting}\label{ssec:conv_stoch}
\subsection{Deterministic Inverse Problems with stochastic noise}
As mentioned previously, the intention of this paper is to show convergence for Inverse Problems under a stochastic noise model using results from the deterministic setting. 
Assume we have at hand a deterministic regularization method of our liking for the solution of \eqref{eq:problem} under the noisy data \eqref{eq:ad_noise} where now $d_\mathcal{Y}(y,y^\delta)\leq \delta$ for some $\delta>0$. By regularization method we understand (possibly nonlinear) mappings
\begin{equation}
R_\alpha:\mathcal{D}(R_\alpha)\subset\mathcal{Y}\rightarrow\mathcal{X},\quad y^\delta\mapsto x_\alpha^\delta
\end{equation}
where $x_\alpha^\eta=R_\alpha(y^\eta)$ is the regularized solution to the regularization parameter $\alpha$ given the data $y^\eta$. Often, $x_\alpha^\delta$ is obtained via the minimization of functionals of the type \eqref{eq:functional}. In order to deserve the name regularization we require $R_\alpha$ to fulfill
\begin{equation}
\lim_{\delta\rightarrow0} ||R_\alpha(y^\delta)-x^\dag||=0
\end{equation}
under a certain choice of the regularization parameter $\alpha$ chosen either a priori $\alpha=\alpha(\delta)$ or a posteriori $\alpha=\alpha(\delta,y^\delta)$. In our notation  $x^\dag$ is the true solution, usually the minimum norm solution with respect to the penalty $\Phi$ in \eqref{eq:functional}, i.e., 
\[
\Phi(x^\dag)\leq\Phi(\bar{x})\quad\text{for all}\quad \bar{x}: F(\bar{x})=y.
\]
Note that, in particular for nonlinear problems, $x^\dag$ does not need to be unique. In \cite{HOF06,Hof_diss_book} it was pointed out that this is problematic for the lifting arguments. A standard argument in the deterministic theory is to prove convergence of subsequences to the desired solution, and then deduces convergence of the whole series of regularized solutions, if possible. In the stochastic setting, this is not possible in general since subsequences for different $\omega$ do not have to be related. A constructed example for this behavior can be found in Section 4.1. of \cite{HOF06,Hof_diss_book}. In order to lift general deterministic regularization methods into the stochastic setting we must therefore require that $x^\dag$ is unique. We formulate our convergence results assuming the noise to be bounded with respect to the Ky fan metric or in expectation. As we will see, in the latter case we have to ``inflate'' the expectation for decreasing variance $\eta$ in order to obtain convergence.
For the analysis we mainly use a lifting argument using deterministic theory. In \cite[Theorem 4.1]{HOF06,Hof_diss_book}, it was proved how by means of the Ky Fan metric deterministic results can be lifted to the space of random variables for nonlinear Tikhonov regularization. Since the theorem is based solely on the fact that there is a deterministic regularization theory and that the probability space $\Omega$ can be decomposed into a part where the deterministic theory holds and a small part where it does not, it is easily generalized. Before we state the Theorem, we need the following Lemmata.
\begin{lemma}(\cite{egoroff}, see also \cite{DUD1989})\label{lem:egoroff}
Let $(\Omega,\mathcal{F},\mathbb{P})$ be a complete probability space. Let $x_k$ and $x$ be measurable functions from $\Omega$ into a metric space $\chi$ with metric $d_\chi$. Suppose $x_k(\omega)\overset{d_\chi}{\rightarrow} x(\omega)$ for $\mathbb{P}$-almost all $\omega\in\Omega$.Then for any $\varepsilon>0$ there is a set $\tilde{\Omega}$ with $\mathbb{P}(\Omega\backslash \tilde{\Omega})<\varepsilon$ such that $x_k\overset{d_\chi}{\rightarrow} x(\omega)$ uniformly on $\tilde{\Omega}$, that is
\begin{equation*}
\lim_{k\rightarrow\infty}\sup \{d_\chi(x_k(\omega),x(\omega)):\omega\in \tilde{\Omega}\}=0.
\end{equation*}
\end{lemma}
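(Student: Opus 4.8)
The plan is to prove this by the classical Egoroff argument, whose only essential ingredient is that $\mathbb{P}$ is a \emph{finite} measure. First I would quantify the failure of convergence by introducing, for each pair of positive integers $n,k$, the ``bad set''
$$A_{n,k}:=\bigcup_{m\ge k}\bigl\{\omega\in\Omega:\ d_\chi(x_m(\omega),x(\omega))>\tfrac1n\bigr\}.$$
These sets are measurable, since the $x_m,x$ are measurable into $\chi$ and $d_\chi$ is continuous. For fixed $n$ the sequence $(A_{n,k})_{k}$ is decreasing in $k$, and its intersection $\bigcap_k A_{n,k}$ consists exactly of those $\omega$ for which $d_\chi(x_m(\omega),x(\omega))>1/n$ holds for infinitely many $m$. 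Such $\omega$ cannot satisfy $x_k(\omega)\to x(\omega)$, so this intersection is contained in the $\mathbb{P}$-null set on which convergence fails, and therefore has measure zero.

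Next I would invoke continuity of measure from above: because $\mathbb{P}(\Omega)=1<\infty$, we have $\mathbb{P}(A_{n,k})\to\mathbb{P}(\bigcap_k A_{n,k})=0$ as $k\to\infty$, for each fixed $n$. This is the one step that genuinely uses finiteness of the probability measure, and it is the crux of the argument — on an infinite measure space the conclusion is known to fail. Given $\varepsilon>0$, I can then choose for each $n$ an index $k_n$ with $\mathbb{P}(A_{n,k_n})<\varepsilon/2^n$ and set
$$\tilde\Omega:=\Omega\setminus\bigcup_{n=1}^\infty A_{n,k_n}.$$
Countable subadditivity then gives $\mathbb{P}(\Omega\setminus\tilde\Omega)\le\sum_{n\ge1}\varepsilon/2^n=\varepsilon$, which is the required smallness of the excluded set.

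Finally I would verify uniform convergence on $\tilde\Omega$. Fix any $n$; since $\tilde\Omega\cap A_{n,k_n}=\emptyset$, every $\omega\in\tilde\Omega$ satisfies $d_\chi(x_m(\omega),x(\omega))\le 1/n$ for all $m\ge k_n$. As the cutoff $k_n$ does not depend on $\omega$, this shows $\sup_{\omega\in\tilde\Omega}d_\chi(x_m(\omega),x(\omega))\le 1/n$ for all $m\ge k_n$; letting $n\to\infty$ yields $\lim_{k\to\infty}\sup\{d_\chi(x_k(\omega),x(\omega)):\omega\in\tilde\Omega\}=0$. The only mild subtlety is the bookkeeping of the two indices $n$ (the accuracy level $1/n$) and $k$ (the tail cutoff), so that a single excluded set simultaneously controls all accuracies; the dyadic weights $\varepsilon/2^n$ handle this cleanly, and no further estimates are needed.
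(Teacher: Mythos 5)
The paper does not prove this lemma at all---it is quoted verbatim as Egoroff's classical theorem with references to \cite{egoroff} and \cite{DUD1989}---so there is no in-paper argument to compare against. Your proof is the standard textbook proof of Egoroff's theorem and it is correct: the sets $A_{n,k}$ decrease in $k$ to a subset of the null set of non-convergence, continuity from above (which is where finiteness of $\mathbb{P}$ enters, as you rightly flag) gives $\mathbb{P}(A_{n,k})\to 0$, and the dyadic choice $k_n$ with the union bound produces a single exceptional set working for all accuracy levels $1/n$ simultaneously. Two cosmetic points: to get the strict inequality $\mathbb{P}(\Omega\setminus\tilde{\Omega})<\varepsilon$ stated in the lemma, choose $\mathbb{P}(A_{n,k_n})<\varepsilon/2^{n+1}$; and the measurability of $\omega\mapsto d_\chi(x_m(\omega),x(\omega))$, which you attribute to continuity of $d_\chi$, strictly requires joint measurability of $(x_m,x):\Omega\to\chi\times\chi$ and hence separability of $\chi$ (or of the essential ranges) --- the same implicit hypothesis under which Dudley states the theorem, so this is a caveat about the lemma's formulation rather than a gap in your argument.
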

\begin{lemma}[\cite{HOF06,Hof_diss_book}, Proposition 1.10]\label{prop:kyfan}
Let $\{x_k\}_{k\in\mathbb{N}}$ be a sequence of random variables that converges to $x$ in the Ky Fan metric. Then for any $\nu>0$ and $\varepsilon>0$ there exist $\tilde{\Omega}\subset\Omega$, $\mathbb{P}(\tilde{\Omega})\geq1-\varepsilon$, and a subsequence $x_{k_j}$ with
\begin{equation*}
d_\mathcal{X}(x_{k_j}(\omega),x(\omega))\leq(1+\nu)\rho_K(x_{k_j},x)\qquad\forall\omega\in\tilde{\Omega}.
\end{equation*}
Furthermore there exists a subsequence that converges to $x$ almost surely.
\end{lemma}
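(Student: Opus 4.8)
The plan is to prove the two assertions of Lemma \ref{prop:kyfan} separately, extracting the right subsequence in each case from the definition of the Ky Fan metric.

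For the first claim, I would argue directly from the definition \eqref{def:kyfan}. Since $\rho_K(x_k,x)\to 0$, for each $j\in\mathbb{N}$ I can choose an index $k_j$ (increasing, so that we obtain a genuine subsequence) with $\rho_K(x_{k_j},x)$ as small as I need; for the qualitative statement, however, the key point is that for the \emph{fixed} index $k_j$ the value $r_j:=\rho_K(x_{k_j},x)$ is the infimum in \eqref{def:kyfan}. By definition of the infimum, for the chosen tolerance $\nu>0$ there is an $\bar\varepsilon$ with $r_j\le\bar\varepsilon\le(1+\nu)r_j$ and $\mathbb{P}(d_\mathcal{X}(x_{k_j},x)>\bar\varepsilon)<\bar\varepsilon$. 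Setting $\Omega_j:=\{\omega: d_\mathcal{X}(x_{k_j}(\omega),x(\omega))\le\bar\varepsilon\}$ gives $\mathbb{P}(\Omega\setminus\Omega_j)\le\bar\varepsilon\le(1+\nu)r_j$ and, on $\Omega_j$, the bound $d_\mathcal{X}(x_{k_j},x)\le\bar\varepsilon\le(1+\nu)r_j=(1+\nu)\rho_K(x_{k_j},x)$, which is exactly the asserted inequality. To get a \emph{single} set $\tilde\Omega$ of measure at least $1-\varepsilon$ that works simultaneously for all chosen indices, I would additionally require $k_j$ large enough that $r_j\le \varepsilon\, 2^{-j}/(1+\nu)$, so that $\mathbb{P}(\Omega\setminus\Omega_j)\le \varepsilon\,2^{-j}$, and then take $\tilde\Omega:=\bigcap_j\Omega_j$; a union bound gives $\mathbb{P}(\Omega\setminus\tilde\Omega)\le\sum_j\varepsilon 2^{-j}=\varepsilon$, and the per-index inequality holds on $\tilde\Omega$ for every retained $k_j$ at once.

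For the second claim, I would combine this construction with a Borel--Cantelli argument. Choosing the subsequence $k_j$ so that $\rho_K(x_{k_j},x)\le 2^{-j}$, the sets $A_j:=\{\omega:d_\mathcal{X}(x_{k_j}(\omega),x(\omega))>2^{-j}\}$ satisfy $\mathbb{P}(A_j)<2^{-j}$, hence $\sum_j\mathbb{P}(A_j)<\infty$. By the Borel--Cantelli lemma, $\mathbb{P}(\limsup_j A_j)=0$, so for $\mathbb{P}$-almost every $\omega$ one has $\omega\notin A_j$ for all sufficiently large $j$, i.e.\ $d_\mathcal{X}(x_{k_j}(\omega),x(\omega))\le 2^{-j}\to 0$. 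This yields almost sure convergence of the subsequence $x_{k_j}$ to $x$.

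The only genuinely delicate point is the handling of the infimum in \eqref{def:kyfan}: the infimum need not be attained, so I must be careful to pick $\bar\varepsilon$ strictly above $r_j$ (within the factor $1+\nu$) where the defining probability inequality actually holds, rather than evaluating at $r_j$ itself, where $\mathbb{P}(d_\mathcal{X}(x_{k_j},x)>r_j)<r_j$ may fail. Everything else — the diagonal/union-bound assembly of $\tilde\Omega$ and the Borel--Cantelli step — is routine once the single-index estimate is in place. Note that Lemma \ref{lem:egoroff} (Egoroff's theorem) is not strictly needed for this proof, since the almost sure convergence of the subsequence is obtained directly; it is presumably recorded for use in the subsequent lifting theorem.
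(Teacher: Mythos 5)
Your proof is correct and follows essentially the same route as the paper's: the same single-index estimate extracted from the definition of the Ky Fan metric (where you are in fact more explicit than the paper about why the slack factor $(1+\nu)$ is needed when the infimum is not attained), followed by the same summable-tail intersection $\tilde\Omega=\bigcap_j\Omega_j$ with a union bound. For the second claim the paper merely cites the standard fact that convergence in probability yields an almost surely convergent subsequence, which is exactly the Borel--Cantelli argument you spell out.
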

\begin{proof}We give a sketch of the proof for the first statement taken from \cite{HOF06,Hof_diss_book}.\\
Set $\sigma_k:=(1+\nu)\rho_K(x_k,x)$. By definition of the Ky Fan metric (\ref{def:kyfan}), for given $\sigma_k$, there exists a set $\Omega_{\sigma_k}$ with $\mathbb{P}(\Omega_{\sigma_k})\geq 1-\sigma_k$ and $\omega\in \Omega_{\sigma_k}$ such that $ d_\mathcal{X}(x(\omega),x_k(\omega))\leq \sigma_k$. For arbitrary $\varepsilon>0$ and $\sigma_k\rightarrow0$ we pick a subsequence $(\sigma_{k^j})$ with $\sum_{j=1}^\infty \sigma_{k^j} \leq \varepsilon$ and introduce the set $\tilde{\Omega}:=\bigcap_{j=1}^\infty \Omega_{\sigma_{k^j}}$. One can check that $\mathbb{P}(\tilde{\Omega})\geq 1-\varepsilon$. Since $\tilde{\Omega}$ is a subset of every $\Omega_{\sigma_{k^j}}$ we have
\begin{equation*}
\forall \omega\in \tilde{\Omega} \subseteq \Omega_{\sigma_{k^j}}: d_\mathcal{X}(x(\omega),x_{k^j}(\omega))\leq \sigma_{k^j},
\end{equation*}
which proves he first statement. The second one follows since convergence in Ky Fan metric is equivalent to convergence in probability, which itself implies almost-sure convergence of a subsequence, cf \cite{DUD1989}.
\end{proof}

With this, we are ready for the convergence theorem which we shall split in two parts, one for the Ky Fan metric as error measure and one for the expectation.
\begin{theorem}\label{thm:lifting_convergence_kyfan}
Let $R_\alpha$ be a regularization method for the solution of \eqref{eq:problem} in the deterministic setting under a suitable choice of the regularization parameter.
Let now $y^\eta=y+\epsilon(\eta)$ where $\epsilon(\eta)$ is a stochastic error such that $\rho_K(y,y^\eta)\rightarrow 0$ as $\eta\rightarrow 0$. Then, assuming \eqref{eq:problem} has a unique solution $x^\dag$ and all necessary assumptions for the deterministic theory (except the bound on the noise) hold with probability one, the regularization method $R_\alpha$ fulfills
\[
\lim_{\eta\rightarrow 0} \rho_K(x^\dag,R_\alpha(y^\eta))=0
\]
under the same parameter choice rule as in the deterministic setting with $\delta$ replaced by $\rho_K(y,y^\eta)$. 
If the regularized solutions are defined by \eqref{eq:sol_exp_conv_bound}, then it holds that
\[
\lim_{\eta\rightarrow 0}\mathbb{E}(d_\mathcal{X}(x^\dag,R_\alpha(y^\eta)))=0.
\]
\end{theorem}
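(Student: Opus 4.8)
The plan is to prove the two statements separately, in both cases exploiting the decomposition of $\Omega$ into a \emph{good} set, on which the realized noise obeys a deterministic bound so that the deterministic theory applies verbatim, and a \emph{bad} set of small probability. Throughout I restrict to the full-measure set on which the remaining deterministic assumptions hold, as granted by hypothesis.

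For the Ky Fan statement, first I would record the precise form of deterministic convergence that is actually needed: because $x^\dag$ is \emph{unique}, the convergence $\|R_{\alpha(\delta)}(y^\delta)-x^\dag\|\to 0$ is uniform over the whole noise ball $\{z:d_\mathcal{Y}(y,z)\leq\delta\}$. Indeed, were it not, one could extract $\delta_n\to 0$ and data $z_n$ with $d_\mathcal{Y}(y,z_n)\leq\delta_n$ but $\|R_{\alpha(\delta_n)}(z_n)-x^\dag\|>\gamma$, contradicting the deterministic convergence theorem; this is exactly where uniqueness of the limit is indispensable, matching the counterexample for non-unique solutions referenced in the text. Hence for every $\gamma>0$ there is $\delta_0>0$ such that $d_\mathcal{Y}(y,z)\leq\delta\leq\delta_0$ forces $\|R_{\alpha(\delta)}(z)-x^\dag\|\leq\gamma$. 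Then the lifting is immediate: fix $\varepsilon>0$, set $\gamma:=\varepsilon$, obtain $\delta_0$, and choose $\eta$ so small that $\delta(\eta):=\rho_K(y,y^\eta)\leq\min\{\delta_0,\varepsilon\}$, which is possible since $\rho_K(y,y^\eta)\to 0$. By the definition of the Ky Fan metric the good set $\Omega_{\mathrm{good}}:=\{\omega:d_\mathcal{Y}(y,y^\eta(\omega))\leq\delta(\eta)\}$ satisfies $\mathbb{P}(\Omega_{\mathrm{good}})\geq 1-\delta(\eta)$. On $\Omega_{\mathrm{good}}$ the uniform bound yields $d_\mathcal{X}(x^\dag,R_\alpha(y^\eta(\omega)))\leq\varepsilon$, so that $\{d_\mathcal{X}(x^\dag,R_\alpha(y^\eta))>\varepsilon\}\subseteq\Omega\setminus\Omega_{\mathrm{good}}$ and therefore $\mathbb{P}(d_\mathcal{X}(x^\dag,R_\alpha(y^\eta))>\varepsilon)\leq\delta(\eta)\leq\varepsilon$. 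By the definition of $\rho_K$ this is $\rho_K(x^\dag,R_\alpha(y^\eta))\leq\varepsilon$, and since $\varepsilon$ was arbitrary the first claim follows.

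For the expectation statement I would reduce to sequences: it suffices to show $\mathbb{E}(d_\mathcal{X}(x^\dag,\tilde{x}_k^{\eta(k)}))\to 0$ for every $\eta(k)\to 0$, where $\tilde{x}_k^\eta$ is the truncated solution from \eqref{eq:sol_exp_conv_bound}. By the first part each such sequence converges to $x^\dag$ in probability, and truncation does not destroy this: since $C_1,C_2>1$ place $x^\dag$ strictly inside the admissible region, the event $\{\tilde{x}_k^\eta\neq R_\alpha(y^{\eta(k)})\}$ forces the reconstruction to leave that region and hence to lie at distance bounded below from $x^\dag$, so its probability is dominated by $\mathbb{P}(d_\mathcal{X}(x^\dag,R_\alpha(y^{\eta(k)}))>c)\to 0$ for a fixed gap $c>0$; consequently $\tilde{x}_k^{\eta(k)}\to x^\dag$ in probability as well. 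The family $\{\tilde{x}_k^{\eta(k)}\}$ is uniformly integrable by the construction preceding the theorem, so Theorem \ref{thm:uniform_int} converts convergence in probability into convergence in $L_1(\mathbb{P})$, which is precisely $\mathbb{E}(d_\mathcal{X}(x^\dag,\tilde{x}_k^{\eta(k)}))\to 0$.

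The main obstacle is the uniformity step in the first part: the deterministic theorems are phrased as sequential convergence statements, and turning them into a bound holding simultaneously for every admissible realization $y^\eta(\omega)$ in the $\delta$-ball is what makes the pointwise (in $\omega$) lifting work, and it is exactly here that uniqueness of $x^\dag$ is forced. By comparison, the remaining measure-theoretic bookkeeping, namely the good/bad decomposition and the transfer of convergence in probability through the truncation, is routine.
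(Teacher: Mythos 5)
Your argument is correct for the setting of this theorem, but it takes a genuinely different route from the paper. The paper does \emph{not} upgrade the deterministic convergence to a uniform statement over the noise ball; instead it works with pointwise (in $\omega$) convergence only, via a worst-case-subsequence argument: it extracts a subsequence realizing $\limsup\rho_K(x^\dag,x_{\alpha(\eta_k)})$, applies Lemma \ref{prop:kyfan} to pass to a further subsequence and a set $\tilde{\Omega}$ with $\mathbb{P}(\tilde\Omega)\geq 1-\varepsilon/2$ on which the realized noise is bounded by $(1+\nu)\rho_K(y,y^{\eta_{k^j_l}})$, applies the deterministic theory pointwise on $\tilde\Omega$, and then invokes Egoroff's theorem (Lemma \ref{lem:egoroff}) to convert the resulting non-uniform pointwise convergence into uniform convergence off a further set of measure $<\varepsilon/2$. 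Your observation that uniqueness of $x^\dag$ forces the deterministic convergence to be \emph{uniform} over $\{z:d_\mathcal{Y}(y,z)\leq\delta\}$ (by the standard contradiction with an offending sequence $(\delta_n,z_n)$) short-circuits all of this: it removes the need for both the subsequence extraction and Egoroff, and yields a cleaner one-step good/bad decomposition. What the paper's heavier machinery buys is robustness: it still functions when the quantities entering the deterministic assumptions depend on $\omega$ (as in the fully stochastic setting of Section \ref{ssec:ex_hofinger}, where the constants $\gamma(\omega)$, $\|v(\omega)\|$ vary and no $\omega$-independent $\delta_0$ exists), whereas your uniformity step relies on $y$, $F$ and $x^\dag$ being deterministic and on the deterministic theorem being quantified over \emph{all} admissible data sequences. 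For Theorem \ref{thm:lifting_convergence_kyfan} as stated, that reliance is legitimate.

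Two small caveats. First, your bound $\mathbb{P}(\Omega_{\mathrm{good}})\geq 1-\delta(\eta)$ with $\delta(\eta)=\rho_K(y,y^\eta)$ needs the infimum in \eqref{def:kyfan} to be attained in the weak sense $\mathbb{P}(d_\mathcal{Y}(y,y^\eta)>\rho_K)\leq\rho_K$; this follows by continuity from below of $\mathbb{P}$ and is also used implicitly in the paper's proof of Theorem \ref{thm:lifting_rates_kyfan}, so it is fine but worth a line. Second, in the expectation part your claim that the truncation event forces $d_\mathcal{X}(x_k^\eta,x^\dag)$ to be bounded below is clean for the norm constraint $\|x_k^\eta\|>C_1\varrho$ (giving the gap $(C_1-1)\varrho$), but not automatic for the pointwise constraint $|x_k^\eta|>C_2C$, since a pointwise excursion on a small set need not separate $x_k^\eta$ from $x^\dag$ in the $\mathcal{X}$-metric; the paper's own one-sentence treatment of this step glosses over the same issue, so this is an imprecision inherited from \eqref{eq:sol_exp_conv_bound} rather than a defect of your argument alone.
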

\begin{proof}
Denote $x_\alpha(\eta):=R_\alpha(y^\eta)$. Define $\theta:=\limsup_{k\rightarrow \infty} \rho_K(x^\dag,x_{\alpha(\eta_k)})$. (Note that $0\leq\theta\leq1$ due to the properties of the Ky Fan metric). We show in the following that for arbitrary $\varepsilon>0$ we have $\theta/2\leq\varepsilon$ and hence 
\[
\limsup_{k\rightarrow \infty}\rho_K(x^\dag,x_{\alpha(\eta_k)})=\lim_{k\rightarrow \infty}\rho_K(x^\dag,x_{\alpha(\eta_k)})=0.
\]
As a first step we pick a ``worst case" subsequence $\{y^ {\eta_{k^j}}\}$ of $\{y^{\eta_k}\}$, a subsequence for which the corresponding solutions satisfy $\rho_K(x^\dag,x_{\alpha(\eta_{k^j})})\geq\theta/2$. We now show that even from this ``worst case" sequence we can pick a subsequence $\{y^{\eta_{k^j_l}}\}$ for which we have $\limsup \rho_K(x^\dag,x_{\alpha(\eta_{k^j_l})})\leq \varepsilon$ for arbitrary $\varepsilon>0$.\\
Let $\varepsilon>0$. According to Lemma \ref{prop:kyfan} we can pick a subsequence $\{y^{\eta_{k^j_l}}\}$ and a set $\tilde{\Omega}$ with $\mathbb{P}(\tilde{\Omega})\geq 1- \frac{\varepsilon}{2}$ as well as $d_\mathcal{Y}(y(\omega),y^{\eta_{k^j_l}}(\omega))\leq (1+\nu)\rho_K(y,y^{\eta_{k^j_l}})$, $\nu>0$ arbitrarily small, on $\tilde{\Omega}$. For all $\omega\in\tilde{\Omega}$, the noise tends to zero. We can therefore use the deterministic result with $\delta=\rho_K(y(\omega),y^{\eta_{k^j_l}})$ and deduce that $x_{\alpha(\eta_{k^j_l})}(\omega)$ converges to the unique solution $x^\dag(\omega)$ for $\eta_{k^j_l}\rightarrow0$, $\omega\in\tilde{\Omega}$ where in the choice of the regularization parameter $\delta$ is substituted by $\rho_K(y(\omega),y^{\eta_{k^j_l}})$. The convergence is not uniform in $\omega$; nevertheless, pointwise convergence implies uniform convergence except on sets of small measure according to Lemma \ref{lem:egoroff}. Therefore there exist $\tilde{\Omega}^\prime\subset\tilde{\Omega}$, $\mathbb{P}(\tilde{\Omega}^\prime)<\frac{\varepsilon}{2}$ and $j_0\in\mathbb{N}$ such that $d_\mathcal{X}(x_{\alpha(\eta_{k^j_l})}(\omega),x^\dag(\omega))<\varepsilon$ $\forall \omega\in\tilde{\Omega}\backslash\tilde{\Omega}^\prime$ and $j\geq j_0$. We thus have
\begin{equation*}
\mathbb{P}\left(\left\lbrace \omega\in\tilde{\Omega}: d_\mathcal{X}(x_{\alpha(\eta_{k^j_l})}(\omega),x^\dag(\omega)) > \varepsilon\right\rbrace\right)\leq\mathbb{P}(\tilde{\Omega}^\prime)\leq\varepsilon/2.
\end{equation*}
Since we split $\Omega=\Omega\backslash\tilde{\Omega}\cup\tilde{\Omega}\backslash\Omega^\prime_\varepsilon\cup\Omega^\prime_\varepsilon$ with $\mathbb{P}(\Omega\backslash\tilde{\Omega})<\frac{\varepsilon}{2}$, $\mathbb{P}(\Omega\backslash\tilde{\Omega})+\mathbb{P}(\tilde{\Omega}^\prime)\leq\varepsilon$ we have shown existence of a subsequence $\eta_{k^j_l}$ such that
\begin{equation*}
\mathbb{P}\left(\left\lbrace\omega\in\Omega:d_\mathcal{X}(x_{\alpha(\eta_{k^j_l})}(\omega),x^\dag(\omega))>\varepsilon\right\rbrace\right)\leq\varepsilon
\end{equation*}
for $\eta_{k^j_l}$ sufficiently small. This $\varepsilon$ is by definition of the Ky Fan metric an upper bound for the distance between $x_{\alpha(\eta_{k^j_l})}$ and $x^\dag$. Therefore we have
\[
\limsup_{l\rightarrow\infty}\rho_K(x_{\alpha(\eta_{k^j_l})},x^\dag)\leq \varepsilon.
\]
On the other hand, the original sequence satisfied $\liminf_{j\rightarrow\infty}\rho_K(x^\dag,x_{\alpha(\eta_{k^j})})\geq \theta/2$. Since $\liminf_{j\rightarrow\infty}\rho_K(x^\dag,x_{\alpha(\eta_k)})\leq\limsup_{l\rightarrow\infty}\rho_K(x_{\alpha(\eta_{k^j_l})},x^\dag)$ it follows $\theta/2\leq\varepsilon$. Because $\varepsilon>0$ was arbitrary, this implies $\theta=0$, which concludes the proof of convergence in the Ky Fan metric. Convergence in expectation follows from Theorem \ref{thm:uniform_int} noting that by \eqref{eq:sol_exp_conv_bound} the sequence of regularized solutions is uniformly integrable.
\end{proof}
\begin{theorem}\label{thm:lifting_convergence_exp}
Let $R_\alpha$ be a regularization method for the solution of \eqref{eq:problem} in the deterministic setting under a suitable choice of the regularization parameter.
Let now $y^\eta=y+\epsilon(\eta)$ where $\epsilon(\eta)$ is a stochastic error such that $\mathbb{E}(d_\mathcal{Y}(y,y^\eta))\rightarrow 0$ as $\eta\rightarrow 0$. Then, assuming \eqref{eq:problem} has a unique solution $x^\dag$ and all necessary assumptions for the deterministic theory (except the bound on the noise) hold with probability one, the regularization method $R_\alpha$ fulfills
\[
\lim_{\eta\rightarrow 0} \rho_K(x^\dag,R_\alpha(y^\eta))=0
\]
under the same parameter choice rule as in the deterministic setting with $\delta$ replaced by $\tau(\eta)\mathbb{E}(d_\mathcal{Y}(y,y^\eta))$ where $\tau(\eta)$ fulfills
\begin{equation}\label{eq:tau_reqs}
\tau(\eta)\overset{\eta\rightarrow0}{\rightarrow}\infty \quad\text{and}\quad \lim_{\eta\rightarrow0}\tau(\eta)\mathbb{E}(d_\mathcal{Y}(y,y^\eta))=0.
\end{equation} 
If the regularized solutions are defined by \eqref{eq:sol_exp_conv_bound} then it holds that
\[
\lim_{\eta\rightarrow 0}\mathbb{E}(d_\mathcal{X}(x^\dag,R_\alpha(y^\eta)))=0.
\]
\end{theorem}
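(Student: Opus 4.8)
The plan is to follow the worst-case-subsequence scheme already carried out for Theorem \ref{thm:lifting_convergence_kyfan}, the only genuinely new ingredient being how one manufactures an \emph{effective deterministic noise level} out of the expectation bound. Writing $\delta(\eta):=\tau(\eta)\mathbb{E}(d_\mathcal{Y}(y,y^\eta))$, the second requirement in \eqref{eq:tau_reqs} ensures $\delta(\eta)\to 0$, which is exactly what the deterministic theory consumes, while the first requirement, $\tau(\eta)\to\infty$, is what will make the exceptional set shrink. First I would apply Markov's inequality \eqref{eq:markov} to the nonnegative random variable $d_\mathcal{Y}(y,y^\eta)$ with $C=\delta(\eta)$, obtaining
\[
\mathbb{P}\bigl(d_\mathcal{Y}(y,y^\eta)>\delta(\eta)\bigr)\leq\frac{\mathbb{E}(d_\mathcal{Y}(y,y^\eta))}{\tau(\eta)\,\mathbb{E}(d_\mathcal{Y}(y,y^\eta))}=\frac{1}{\tau(\eta)}.
\]
Hence on a set of probability at least $1-1/\tau(\eta)$ the realization-wise bound $d_\mathcal{Y}(y(\omega),y^\eta(\omega))\leq\delta(\eta)$ holds, i.e.\ $\delta(\eta)$ is a bona fide deterministic noise bound on a set whose complement has measure tending to zero.

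With this at hand I would reproduce the earlier argument. Fix $\eta_k\to 0$, set $x_{\alpha(\eta)}:=R_\alpha(y^\eta)$ with $\alpha$ determined by the deterministic rule fed with $\delta(\eta)$, and define $\theta:=\limsup_k\rho_K(x^\dag,x_{\alpha(\eta_k)})\in[0,1]$. Passing to a worst-case subsequence with $\rho_K(x^\dag,x_{\alpha(\eta_k)})\geq\theta/2$ and fixing $\varepsilon>0$, I would use the Markov bound above together with $\tau(\eta_k)\to\infty$ to select a further subsequence $\{\eta_{k^j_l}\}$ along which the bad sets $B_l:=\{d_\mathcal{Y}(y,y^{\eta_{k^j_l}})>\delta(\eta_{k^j_l})\}$ satisfy $\sum_l\mathbb{P}(B_l)\leq\varepsilon/2$. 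Then $\tilde\Omega:=\Omega\setminus\bigcup_l B_l$ obeys $\mathbb{P}(\Omega\setminus\tilde\Omega)\leq\varepsilon/2$, and on $\tilde\Omega$ one has $d_\mathcal{Y}(y(\omega),y^{\eta_{k^j_l}}(\omega))\leq\delta(\eta_{k^j_l})\to 0$ \emph{simultaneously} for all $l$. This single good set is precisely what Lemma \ref{prop:kyfan} supplied in the earlier proof; here it is produced directly from Markov's inequality and the inflation factor.

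On $\tilde\Omega$ the deterministic convergence result then applies pointwise, so $x_{\alpha(\eta_{k^j_l})}(\omega)\to x^\dag(\omega)$ for every $\omega\in\tilde\Omega$, where uniqueness of $x^\dag$ is essential, exactly as stressed before the theorem, to guarantee that these pointwise limits all coincide with the single element $x^\dag$. Egoroff's theorem (Lemma \ref{lem:egoroff}) turns this into uniform convergence off a further set $\tilde\Omega'\subset\tilde\Omega$ with $\mathbb{P}(\tilde\Omega')<\varepsilon/2$, so that $\mathbb{P}(d_\mathcal{X}(x_{\alpha(\eta_{k^j_l})},x^\dag)>\varepsilon)\leq\varepsilon$ for $l$ large, whence $\limsup_l\rho_K(x_{\alpha(\eta_{k^j_l})},x^\dag)\leq\varepsilon$. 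As this subsequence was drawn from the worst-case one, it forces $\theta/2\leq\varepsilon$, and $\varepsilon>0$ being arbitrary gives $\theta=0$, i.e.\ convergence in the Ky Fan metric. The concluding convergence in expectation under \eqref{eq:sol_exp_conv_bound} follows exactly as in Theorem \ref{thm:lifting_convergence_kyfan}: the truncated solutions are uniformly integrable, Ky Fan convergence is convergence in probability, and Theorem \ref{thm:uniform_int} upgrades it to $L_1$-convergence.

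The step I expect to be most delicate is the calibration of $\tau$, and it is exactly here that the inflation cannot be dispensed with: one needs $\tau(\eta)\to\infty$ so that the Markov estimate $1/\tau(\eta)$ renders the exceptional set negligible, yet at the same time $\tau(\eta)\mathbb{E}(d_\mathcal{Y}(y,y^\eta))\to 0$ so that the effective deterministic noise level still vanishes and the deterministic theory can be invoked. Without inflation ($\tau\equiv 1$) Markov only bounds the bad set by $1$, which is useless, whereas too aggressive an inflation would keep the surrogate noise level from tending to zero. The two competing constraints in \eqref{eq:tau_reqs} are precisely what reconciles these demands, and noting that such a $\tau$ always exists (e.g.\ $\tau(\eta)=\mathbb{E}(d_\mathcal{Y}(y,y^\eta))^{-1/2}$) is the one genuinely new point relative to the Ky Fan version.
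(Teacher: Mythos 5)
Your proposal is correct and follows essentially the same route as the paper: Markov's inequality applied to $d_\mathcal{Y}(y,y^\eta)$ at the inflated level $\tau(\eta)\mathbb{E}(d_\mathcal{Y}(y,y^\eta))$ yields a deterministic noise bound off a set of measure at most $1/\tau(\eta)$, after which the worst-case-subsequence/Egoroff argument of Theorem \ref{thm:lifting_convergence_kyfan} is repeated verbatim, and the expectation statement follows from uniform integrability and Theorem \ref{thm:uniform_int}. Your construction of a single good set $\tilde\Omega$ by choosing the subsequence so that the bad-set probabilities are summable below $\varepsilon/2$ is in fact slightly more careful than the paper's wording, which only imposes $2/\tau(\eta_{k^j_l})\leq\varepsilon$ per index; this summability device mirrors exactly what Lemma \ref{prop:kyfan} supplies in the Ky Fan version.
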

\begin{proof}
As previously we pick a ``worst case" subsequence $\{y^ {\eta_{k^j}}\}$ of $\{y^{\eta_k}\}$, a subsequence for which the corresponding solutions satisfy $\rho_K(x^\dag,x_{\alpha(\eta_{k^j})})\geq\theta/2$.\\
Let $\varepsilon>0$. We can now pick a subsequence which we again denote by $\{y^{\eta_{k^j_l}}\}$ fulfilling $\frac{2}{\tau(\eta_{k^j_l})}\leq\varepsilon$, where without loss of generality $t(\eta_{k^j_l})>1$, such that 
\[
\mathbb{P}(\omega:d_\mathcal{Y}(y(\omega)-y^{\eta_{k^j_l}}(\omega))>\tau({\eta_{k^j_l}})\mathbb{E}(d_\mathcal{Y}(y,y^{\eta_{k^j_l}})))\leq \frac{1}{\tau(\eta_{k^j_l})}\leq\frac{\varepsilon}{2}.
\]
This again defines, via the complement in $\Omega$, $\tilde{\Omega}$ with $\mathbb{P}(\tilde{\Omega})\geq 1- \frac{\varepsilon}{2}$ on which $d_\mathcal{Y}(y(\omega),y^{\eta_{k^j_l}}(\omega))\leq \tau({\eta_{k^j_l}})\mathbb{E}(d_\mathcal{Y}(y,y^{\eta_{k^j_l}}))$. As before, we can now apply the deterministic theory by substituting $\delta$ with $\tau({\eta_{k^j_l}})\mathbb{E}(d_\mathcal{Y}(y,y^{\eta_{k^j_l}}))$. The remainder of the proof is identical to the one of Theorem \ref{thm:lifting_convergence_exp}.
\end{proof}

The theorems justify the use of deterministic algorithms under a stochastic noise model. Since the proof is solely based on relating the stochastic noise to a deterministic one on subsets of $\Omega$ and does not use any specific properties of the regularization methods or the underlying spaces, it opens most of the deterministic methods for the a stochastic noise model. In particular, the parameter choice rules from the deterministic setting are easily adapted.

As usual in deterministic literature, the general convergence theorem is followed by convergence rates which are obtained under additional assumptions. Often these conditions ensure at least local uniqueness of the true solution. If not, we have to require such a property for the same reason as previously. 
\begin{theorem}\label{thm:lifting_rates_kyfan}
Let $R_\alpha$ be a regularization method for the solution of \eqref{eq:problem} in the deterministic setting such that, under a set of assumptions on the operator $F$ and the solutions $x^\dag$ and a suitable choice of the regularization parameter,
\[
d_\mathcal{X}(x^\dag,R_\alpha(y^\delta))\leq \varphi(d_\mathcal{Y}(y,y^\delta))
\]
with a monotonically increasing right-continuous function $\varphi$.

Let now $y^\eta=y+\epsilon(\eta)$ where $\epsilon(\eta)$ is a stochastic error such that 
\begin{itemize}\item[a)] $\rho_K(y,y^\eta)\rightarrow 0$ or
\item[b)] $\mathbb{E}(d_\mathcal{Y}(y,y^\eta))\rightarrow 0$ 
\end{itemize}
as $\eta\rightarrow 0$. Then, assuming all necessary assumptions for the deterministic theory (except the bound on the noise) hold with probability one and that there is (either by the deterministic conditions or by additional assumption) a (locally) unique solution $x^\dag$ to \eqref{eq:problem}, the regularization method $R_\alpha$ fulfills
\[
\rho_K(x^\dag,R_\alpha(y^\eta))=\mathcal{O}(\max\{\varphi(\rho_K(y,y^\eta)),\rho_K(y,y^\eta)\})
\]
in case a) or, respectively, in case b),
\[
\rho_K(x^\dag,R_\alpha(y^\eta))=\mathcal{O}(\max\{\varphi(\tau(\eta)\mathbb{E}(d_\mathcal{Y}(y,y^\eta))),\mathbb{P}(d_\mathcal{Y}(y,y^\eta)\geq \tau(\eta)\mathbb{E}(d_\mathcal{Y}(y,y^\eta)))\})
\]
under the same parameter choice rule as in the deterministic setting with $\delta$ replaced by $\rho_K(y,y^\eta)$ (case a)) or $\tau(\eta)\mathbb{E}(d_\mathcal{Y}(y,y^\eta))$ where $\tau(\eta)$ fulfills \eqref{eq:tau_reqs} (case b)).
\end{theorem}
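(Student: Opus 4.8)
The plan is to reuse the good-set/bad-set splitting of $\Omega$ from the proofs of Theorems \ref{thm:lifting_convergence_kyfan} and \ref{thm:lifting_convergence_exp}, but to retain the deterministic error bound quantitatively instead of merely sending it to zero. I would fix the effective deterministic noise level prescribed by the parameter choice rule, namely $\delta^{\ast}:=\rho_K(y,y^\eta)$ in case a) and $\delta^{\ast}:=\tau(\eta)\mathbb{E}(d_\mathcal{Y}(y,y^\eta))$ in case b), and choose the regularization parameter for this single (deterministic) level. I would then define the good set $G:=\{\omega:d_\mathcal{Y}(y(\omega),y^\eta(\omega))\leq\delta^{\ast}\}$ and estimate its complement. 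In case a), the definition of the Ky Fan metric \eqref{def:kyfan} together with continuity of the measure (letting $\varepsilon\downarrow\rho_K(y,y^\eta)$) gives $\mathbb{P}(\Omega\setminus G)\leq\rho_K(y,y^\eta)$; in case b), Markov's inequality \eqref{eq:markov}, exactly as in the proof of Theorem \ref{thm:lifting_convergence_exp}, yields $\mathbb{P}(\Omega\setminus G)=\mathbb{P}(d_\mathcal{Y}(y,y^\eta)\geq\delta^{\ast})\leq 1/\tau(\eta)$.

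The second step transports the deterministic rate onto $G$. Every realization $\omega\in G$ has genuine noise $d_\mathcal{Y}(y(\omega),y^\eta(\omega))\leq\delta^{\ast}$, so the deterministic hypothesis applies pointwise with the parameter chosen for level $\delta^{\ast}$, and monotonicity of $\varphi$ yields
\[
d_\mathcal{X}(x^\dag(\omega),R_\alpha(y^\eta)(\omega))\leq\varphi(\delta^{\ast})\qquad\text{for all }\omega\in G.
\]
Here the (local) uniqueness of $x^\dag$ is what makes the right-hand target a single, well-defined point across all $\omega$, so that these pointwise estimates can be assembled into one statement about the random variable $R_\alpha(y^\eta)$; this is the same role uniqueness played in Theorem \ref{thm:lifting_convergence_kyfan}. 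Right-continuity of $\varphi$ would enter only if one prefers to follow the earlier proofs and approach $\delta^{\ast}$ from above --- e.g.\ through the $(1+\nu)$-inflation of Lemma \ref{prop:kyfan} or through the infimum in \eqref{def:kyfan} --- in which case $\varphi$ of the slightly inflated level must converge to $\varphi(\delta^{\ast})$ as the inflation vanishes.

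The final step reads off the Ky Fan bound. I would set $M:=\max\{\varphi(\delta^{\ast}),\,\mathbb{P}(\Omega\setminus G)\}$. Since the reconstruction error can exceed $M$ only on $\Omega\setminus G$, one has $\mathbb{P}(d_\mathcal{X}(x^\dag,R_\alpha(y^\eta))>M)\leq\mathbb{P}(\Omega\setminus G)\leq M$, and as this persists for every $M'>M$ the definition \eqref{def:kyfan} forces $\rho_K(x^\dag,R_\alpha(y^\eta))\leq M$. Substituting the two expressions for $\delta^{\ast}$ and for $\mathbb{P}(\Omega\setminus G)$ reproduces the claimed rates, $\max\{\varphi(\rho_K(y,y^\eta)),\rho_K(y,y^\eta)\}$ in case a) and $\max\{\varphi(\tau(\eta)\mathbb{E}(d_\mathcal{Y}(y,y^\eta))),\mathbb{P}(d_\mathcal{Y}(y,y^\eta)\geq\tau(\eta)\mathbb{E}(d_\mathcal{Y}(y,y^\eta)))\}$ in case b), in fact with constant one, which in particular delivers the stated $\mathcal{O}$-behaviour as $\eta\to 0$.

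The step I expect to be the main obstacle is justifying the pointwise use of the deterministic rate on $G$ under a single, $\omega$-independent choice of the noise level. The deterministic estimate is a worst-case bound over the entire noise ball of radius $\delta^{\ast}$, so I must argue that committing to $\alpha$ for the level $\delta^{\ast}$ (rather than for the unknown, $\omega$-dependent actual noise) still produces the bound $\varphi(\delta^{\ast})$ throughout $G$. For a priori rules this is immediate. For a posteriori rules $\alpha=\alpha(\delta^{\ast},y^\eta(\omega))$ is itself random, so I would invoke the deterministic a posteriori rate separately for each fixed $\omega\in G$ --- legitimate precisely because the deterministic assumptions are assumed to hold with probability one --- and the only remaining care is the measure-theoretic bookkeeping at the boundary of the Ky Fan infimum, handled by the monotone-limit arguments above.
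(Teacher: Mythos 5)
Your proposal is correct and follows essentially the same route as the paper's proof: decompose $\Omega$ into the set where the noise is bounded by the effective deterministic level ($\rho_K(y,y^\eta)$ or $\tau(\eta)\mathbb{E}(d_\mathcal{Y}(y,y^\eta))$), bound the complement's probability via the Ky Fan definition or Markov's inequality, apply the deterministic rate pointwise on the good set, and read off the Ky Fan bound as the maximum of the two quantities. Your explicit handling of the strict inequality at the boundary of the Ky Fan infimum (letting $\varepsilon\downarrow\rho_K$) is a minor tightening of a step the paper glosses over, but the argument is the same.
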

\begin{proof}
We start again with the Ky Fan distance as noise measure. Since we have the deterministic theory at hand, we know that $d_\mathcal{X}(x^\dag,R_\alpha(y^\eta))\leq C \varphi(\delta)$ whenever $d_\mathcal{Y}(y,y^\eta)\leq\delta$. With $\delta=\rho_K(y,y^\eta)$ we have, since $\varphi$ is monotonically increasing and right continuous,
\begin{align*}
\mathbb{P}(d_\mathcal{X}(x^\dag,R_\alpha(y^\eta))&>\varphi(\rho_K(y,y^\eta)))\leq \mathbb{P}(d_\mathcal{Y}(y,y^\eta)\geq\rho_K(y,y^\eta))\\
&\leq1-\mathbb{P}(d_\mathcal{Y}(y,y^\eta)\leq\rho_K(y,y^\eta))\\
&\leq1-(1-\rho_K(y,y^\eta))=\rho_K(y,y^\eta)
\end{align*}
and hence by definition $\rho_K(x^\dag,R_\alpha(y^\eta))=\mathcal{O}(\max\{\varphi(\rho_K(y,y^\eta)),\rho_K(y,y^\eta)\})$.

If the expectation is used as measure for the data error, we have
\[
\mathbb{P}(d_\mathcal{Y}(y,y^\eta)\geq\tau(\eta)\mathbb{E}(d_\mathcal{Y}(y,y^\eta)))\leq\frac{1}{\tau(\eta)}
\]
by Markovs inequality. Hence, with probability $1-\frac{1}{\tau( \eta)}$ we are in the deterministic setting with $\delta=\tau(\eta)\mathbb{E}(d_\mathcal{Y}(y,y^\eta))$ and
\begin{align*}
\mathbb{P}(d_\mathcal{X}(x^\dag,R_\alpha(y^\delta))&> \varphi(\tau(\eta)\mathbb{E}(d_\mathcal{Y}(y,y^\eta))))\\&\leq \mathbb{P}(d_\mathcal{Y}(y,y^\eta)>\tau(\eta)\mathbb{E}(d_\mathcal{Y}(y,y^\eta)))\leq\frac{1}{\tau( \eta)}.
\end{align*}
The convergence rate follows by the definition of the Ky Fan metric.
\end{proof}
For Inverse Problems, the convergence rates are most often given by functions which decay at most linearly fast, i.e., \[
\max\{\varphi(\rho_K(y,y^\eta)),\rho_K(y,y^\eta)\}=\varphi(\rho_K(y,y^\eta)).
\]
Hence in this case the convergence rates are preserved in the Ky Fan metric. For the expectation this is not the case. We have to gradually inflate the expectation by the parameter $\tau$ in order to obtain convergence (and rates). Let us discuss the simple example of Gaussian noise in the finite dimensional setting, i.e. $\epsilon$ from \eqref{eq:ad_noise} consists of $m\in\mathbb{N}$ i.i.d. random variables $\epsilon_i\sim\mathcal{N}(0,\eta^2I_m)$ with zero mean and variance $\eta^2$. Then it has been shown in \cite{diss} that for any $\tau>1$ 
\begin{equation}\label{eq:exp_schaetzung}
\mathbb{P}(||\eps||_2\geq\tau\mathbb{E}(||\eps||_2))=\frac{\Gamma(\frac{m}{2},(\tau\Gamma(\frac{m+1}{2})/\Gamma(\frac{m}{2}))^2)}{\Gamma(\frac{m}{2})}
\end{equation} 
with the gamma functions $\Gamma(\cdot)$ and $\Gamma(\cdot,\cdot)$ defined as
\begin{equation*}
\Gamma(a)=\int_0^\infty t^{a-1}e^{-t}dt,\quad\Gamma(a,z)=\int_z^\infty t^{a-1}e^{-t}dt.
\end{equation*}
In particular, \eqref{eq:exp_schaetzung} is independent of the variance $\eta^2$. In order to to decrease the probability to zero, we therefore have to link $\tau$ with the variance. For Gaussian noise of the above kind the following estimate for the Ky Fan distance between true and noisy data has been given in \cite{NeuPik}.
\begin{proposition}\label{prop:kyfandistance}
Let $\xi$ be a random variable with values in $\mathbb{R}^m$. Assume that the distribution of $\epsilon$ is $\mathcal{N}(0,\eta^2I_m)$ with $\sigma>0$. Then it holds in $(\mathbb{R}^m,||\cdot||_2)$ that
\begin{equation}\label{def:error_neupik}
\rho_K(\epsilon,0)\leq \min\left\lbrace 1,\sqrt{2}\eta\sqrt{m-\min\left\lbrace\ln\left(\eta^22\pi m^2\left(\frac{e}{2}\right)^m\right),0\right\rbrace} \right\rbrace.
\end{equation}
\end{proposition}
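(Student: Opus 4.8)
The plan is to unwind the definition of the Ky Fan metric \eqref{def:kyfan} into a single scalar tail inequality and then control the tail of a Gaussian norm. Since $\rho_K(\epsilon,0)=\inf_{\varepsilon>0}\{\mathbb{P}(\|\epsilon\|_2>\varepsilon)<\varepsilon\}$, it suffices to exhibit one value $\varepsilon^\ast$ with $\mathbb{P}(\|\epsilon\|_2>\varepsilon^\ast)\le\varepsilon^\ast$: because the survival function $\varepsilon\mapsto\mathbb{P}(\|\epsilon\|_2>\varepsilon)$ is continuous and strictly decreasing, every $\varepsilon'>\varepsilon^\ast$ then lies in the admissible set, forcing $\rho_K(\epsilon,0)\le\varepsilon^\ast$. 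The outer $\min\{1,\cdot\}$ in \eqref{def:error_neupik} is automatic because $\rho_K\le1$ by construction, so the real content is to show that $\varepsilon^\ast:=\sqrt{2}\,\eta\sqrt{z^\ast}$, with $z^\ast:=m-\min\{\ln(\eta^2 2\pi m^2(e/2)^m),0\}$, satisfies this one inequality.

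First I would identify the distribution of the residual norm. Writing $\epsilon=(\epsilon_1,\dots,\epsilon_m)$ with i.i.d.\ $\epsilon_i\sim\mathcal{N}(0,\eta^2)$, the rescaled square $\|\epsilon\|_2^2/\eta^2$ is $\chi^2_m$-distributed, and an elementary change of variables (as already used implicitly for \eqref{eq:exp_schaetzung}) gives the closed form
\[
\mathbb{P}(\|\epsilon\|_2>\varepsilon)=\frac{\Gamma\!\left(\tfrac{m}{2},\,\tfrac{\varepsilon^2}{2\eta^2}\right)}{\Gamma\!\left(\tfrac{m}{2}\right)},
\]
with the upper incomplete gamma function in the numerator. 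Setting $z=\varepsilon^2/(2\eta^2)$ turns the target inequality into $\Gamma(\tfrac m2,z)/\Gamma(\tfrac m2)\le\sqrt{2}\,\eta\sqrt{z}$, a purely analytic statement about the incomplete gamma function.

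The heart of the argument is a tail bound valid past the integrand's peak. Writing $\Gamma(\tfrac m2,z)=\int_z^\infty u^{m/2-1}e^{-u}\,du$, the exponent $\psi(u)=(\tfrac m2-1)\ln u-u$ is concave, so for $z>\tfrac m2-1$ the tangent-line estimate $\psi(u)\le\psi(z)+\psi'(z)(u-z)$ dominates the integrand by an exponential and yields $\Gamma(\tfrac m2,z)\le z^{m/2}e^{-z}/(z-\tfrac m2+1)$. Inserting a Stirling estimate for $\Gamma(\tfrac m2)$ then produces exactly the constants $2\pi m^2$ and $(e/2)^m$; taking logarithms reduces the verification of $\Gamma(\tfrac m2,z)/\Gamma(\tfrac m2)\le\sqrt{2}\,\eta\sqrt{z}$ to a transcendental inequality of the shape $\tfrac m2\ln z-z\le\mathrm{const}+\ln\eta$, whose solution threshold is $z^\ast=m-\min\{\ln(\eta^2 2\pi m^2(e/2)^m),0\}$. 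The two branches of the inner $\min\{\cdot,0\}$ match the two regimes: for large $\eta$ the threshold saturates at the mode scale $z^\ast=m$ (so $\varepsilon^\ast=\sqrt{2}\,\eta\sqrt m$), while for small $\eta$ the logarithm is negative and one must push further into the tail, $z^\ast=m-\ln(\cdots)>m$.

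I expect the main obstacle to be precisely this last step: producing the explicit, non-asymptotic tail bound with the stated constants and then solving the resulting transcendental inequality in closed form rather than merely asymptotically. Care is needed to (i) verify that $z^\ast$ indeed exceeds the integrand's peak so that the concave tail estimate applies in both regimes, and (ii) track the Stirling constants sharply enough that $\sqrt{2}\,\eta\sqrt{z^\ast}$ is a genuine upper bound for the tail and not merely of the right order. Once the scalar inequality $\Gamma(\tfrac m2,z^\ast)/\Gamma(\tfrac m2)\le\varepsilon^\ast$ is established, the reduction of the first paragraph immediately yields $\rho_K(\epsilon,0)\le\min\{1,\varepsilon^\ast\}$, which is \eqref{def:error_neupik}.
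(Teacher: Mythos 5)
The paper does not actually prove this proposition --- it is quoted verbatim from the cited reference [NeuPik] --- so there is no in-paper argument to compare against; your route is the natural one and, to the best of my knowledge, essentially the one used in that reference. Your plan is sound and all of its assertions check out, including the step you flag as the main risk. To confirm: with $a=m/2$, $L:=\ln\bigl(\eta^2 2\pi m^2(e/2)^m\bigr)$, $M:=\min\{L,0\}$ and $z^\ast=m-M\geq m$, the tangent-line bound gives $\Gamma(a,z^\ast)\leq (z^\ast)^{a}e^{-z^\ast}/(z^\ast-a+1)$ (valid since $z^\ast>a-1$), Stirling gives $\Gamma(a)\geq\sqrt{2\pi/a}\,(a/e)^{a}$, and $(z^\ast/m)^{m/2}=(1-M/m)^{m/2}\leq e^{-M/2}$ together with $e^{M/2}\leq e^{L/2}=\eta\sqrt{2\pi}\,m\,(e/2)^{m/2}$ yields
\[
\frac{\Gamma(a,z^\ast)}{\Gamma(a)}\;\leq\;\frac{\sqrt{m}\,(2/e)^{m/2}e^{M/2}}{2\sqrt{\pi}\,(z^\ast-m/2+1)}\;\leq\;\frac{\eta\, m^{3/2}}{\sqrt{2}\,(z^\ast-m/2+1)}\;\leq\;\sqrt{2}\,\eta\sqrt{m}\;\leq\;\sqrt{2}\,\eta\sqrt{z^\ast},
\]
using $z^\ast-m/2+1\geq m/2$; by your reduction in the first paragraph (which in fact needs only that the survival function is non-increasing, not strict monotonicity) this gives $\rho_K(\epsilon,0)\leq\sqrt{2}\,\eta\sqrt{z^\ast}$, and the outer $\min\{1,\cdot\}$ is free. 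So the proposal is correct; the only thing separating it from a finished proof is writing out this last computation rather than describing it.
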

Recall that
\begin{equation}\label{eq:expe_gauss}
\mathbb{E}(||\epsilon||_2)=\eta\Gamma\left(\frac{1+m}{2}\right)/\left(\sqrt{2}\Gamma\left(\frac{m}{2}\right)\right)\leq \eta\sqrt{m},
\end{equation}
see e.g. \cite{diss}. Comparing \eqref{def:error_neupik} and \eqref{eq:expe_gauss}, one sees that $\mathbb{E}(||\epsilon||_2)<\rho_K(\epsilon,0)$ and in particular the decay of $\rho_K(\epsilon,0)$ slows down with decreasing $\eta$. In other words, the artificial inflation we had to impose on the expectation is automatically included in the Ky Fan distance which we suppose is the reason why the convergence theory carries over in such a direct fashion for the Ky Fan metric.

For many nonlinear Inverse Problems the requirement of a unique solution is too strong. Often one has several solutions of the same quality, in particular there exists more than one minimum norm solution. In this case, Theorem \ref{thm:lifting_convergence_kyfan} is not applicable. In the example \cite[Example 4.3 and 4.5]{HOF06,Hof_diss_book} with two minimum norm solutions the noise was constructed such that, while the error in the data converges to zero, for each fixed $\omega\in\Omega$ the regularized solutions jump between both solutions such that no converging subsequence can be found. The main problem there is that the Ky Fan distance cannot incorporate the concept that all minimum norm solutions are equally acceptable. We will now define a pseudo metric that resolves this issue.
\begin{definition}
Let $(\mathcal{X},d_{\mathcal{X}})$ be a metric space. Denote with $\mathcal{L}$ the set of minimum-norm solutions to \eqref{eq:problem}. 
Then
\begin{equation}
\rho_K^\mathcal{L}(x):=\inf_{\varepsilon>0}\left\lbrace \mathbb{P}\left( \inf_{x^\dag\in\mathcal{L}} d_{\mathcal{X}}(x,x^\dag)>\varepsilon\right)\leq\varepsilon\right\rbrace
\end{equation}
measures the distance between an element $x\in\mathcal{X}$ and the set $\mathcal{L}$, in particular it is
\[
\rho_K^\mathcal{L}(x)=0\quad\Leftrightarrow \quad x\in\mathcal{L} \quad\text{almost surely.}
\]
\end{definition}
With this, one can define a pseudometric on $(\Omega,\mathcal{F},\mathbb{P})$ via
\begin{equation}\label{def:pseudometric}
\rho_K^\mathcal{L}(x_1,x_2)=:\max\{\rho_K^\mathcal{L}(x_1),\rho_K^\mathcal{L}(x_2)\}.
\end{equation}
Obviously \eqref{def:pseudometric} is positive, symmetric and fulfills the triangle inequality. However, $\rho_K^\mathcal{L}(x_1,x_2)=0$ does not imply $x_1=x_2$ a.e. but instead $x_1\wedge x_2\in\mathcal{L}$ which fixes the aforementioned issue of the Ky Fan metric and allows the following theorems.
\begin{theorem}
Let $R_\alpha$ be a regularization method for the solution of \eqref{eq:problem} in the deterministic setting under a suitable choice of the regularization parameter.
Let now $y^\eta=y+\epsilon(\eta)$ where $\epsilon(\eta)$ is a stochastic error such that 
\begin{itemize}\item[a)] $\rho_K(y,y^\eta)\rightarrow 0$ or
\item[b)] $\mathbb{E}(d_\mathcal{Y}(y,y^\eta))\rightarrow 0$ 
\end{itemize}
as $\eta\rightarrow 0$. Then, assuming all necessary assumptions for the deterministic theory (except the bound on the noise) hold with probability one, the regularization method $R_\alpha$ fulfills
\[
\lim_{\eta\rightarrow 0} \rho_K^\mathcal{L}(R_\alpha(y^\eta))=0
\]
under the same parameter choice rule as in the deterministic setting with $\delta$ replaced by $\rho_K(y,y^\eta)$ (case a)) or $\tau(\eta)\mathbb{E}(d_\mathcal{Y}(y,y^\eta))$ where $\tau(\eta)$ fulfills \eqref{eq:tau_reqs} (case b)). In particular, the series of regularized solutions fulfills
\[
\lim_{\eta_1,\eta_2\rightarrow0}\rho_K^\mathcal{L}(R_\alpha(y^{\eta_1}),R_\alpha(y^{\eta_2}))=0
\]
\end{theorem}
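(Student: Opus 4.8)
The plan is to repeat the argument of Theorem \ref{thm:lifting_convergence_kyfan} almost verbatim, with the single structural change that the pointwise distance $d_\mathcal{X}(x^\dag(\omega),\cdot)$ to the (no longer unique) solution is replaced by the distance to the whole solution set, $g(\cdot):=\inf_{x^\dag\in\mathcal{L}}d_\mathcal{X}(\cdot,x^\dag)$, and correspondingly $\rho_K(x^\dag,\cdot)$ by $\rho_K^\mathcal{L}(\cdot)$. First I would set $\theta:=\limsup_{k\to\infty}\rho_K^\mathcal{L}(x_{\alpha(\eta_k)})$ with $x_{\alpha(\eta_k)}:=R_\alpha(y^{\eta_k})$, note $0\le\theta\le 1$, and extract a ``worst case'' subsequence $\{y^{\eta_{k^j}}\}$ with $\rho_K^\mathcal{L}(x_{\alpha(\eta_{k^j})})\ge\theta/2$. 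The goal, as before, is to show that from this subsequence one can still extract a sub-subsequence whose $\limsup$ of $\rho_K^\mathcal{L}$ is at most an arbitrary $\varepsilon>0$, forcing $\theta=0$.

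Fix $\varepsilon>0$. I would build the high-probability set $\tilde\Omega$ exactly as in the two earlier theorems: in case a) Lemma \ref{prop:kyfan} supplies a subsequence $\{y^{\eta_{k^j_l}}\}$ and $\tilde\Omega$ with $\mathbb{P}(\tilde\Omega)\ge 1-\varepsilon/2$ on which $d_\mathcal{Y}(y(\omega),y^{\eta_{k^j_l}}(\omega))\le(1+\nu)\rho_K(y,y^{\eta_{k^j_l}})$; in case b) Markov's inequality gives, after passing to a subsequence with $2/\tau(\eta_{k^j_l})\le\varepsilon$, a set $\tilde\Omega$ of the same measure on which $d_\mathcal{Y}(y(\omega),y^{\eta_{k^j_l}}(\omega))\le\tau(\eta_{k^j_l})\mathbb{E}(d_\mathcal{Y}(y,y^{\eta_{k^j_l}}))$. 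On $\tilde\Omega$ the realized data noise tends to zero, so the deterministic regularization theory applies pointwise with $\delta$ equal to the respective bound. The decisive point, and the reason the pseudometric removes the obstruction of Theorem \ref{thm:lifting_convergence_kyfan}, is that without uniqueness the deterministic theory still yields $g(x_{\alpha(\eta_{k^j_l})}(\omega))=\inf_{x^\dag\in\mathcal{L}}d_\mathcal{X}(x_{\alpha(\eta_{k^j_l})}(\omega),x^\dag)\to 0$ for every $\omega\in\tilde\Omega$: every sequence of regularized solutions has a subsequence converging to some minimum-norm solution, and if the distance to $\mathcal{L}$ did not vanish along the full sequence, one such convergent sub-subsequence would contradict it. Hence the scalar functions $g_l(\omega):=g(x_{\alpha(\eta_{k^j_l})}(\omega))$ converge to $0$ pointwise on $\tilde\Omega$, irrespective of which element of $\mathcal{L}$ is approached for each $\omega$.

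I would then apply Egoroff's theorem (Lemma \ref{lem:egoroff}) to $\{g_l\}$. This requires measurability of $g_l$, which holds because $x\mapsto\inf_{x^\dag\in\mathcal{L}}d_\mathcal{X}(x,x^\dag)$ is $1$-Lipschitz and hence continuous, so $g_l$ is the composition of a continuous map with the measurable $x_{\alpha(\eta_{k^j_l})}$. Egoroff then produces $\tilde\Omega'\subset\tilde\Omega$ with $\mathbb{P}(\tilde\Omega')<\varepsilon/2$ and $l_0$ such that $g_l(\omega)<\varepsilon$ for all $\omega\in\tilde\Omega\setminus\tilde\Omega'$ and $l\ge l_0$. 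Decomposing $\Omega$ into $\Omega\setminus\tilde\Omega$, $\tilde\Omega'$ and the remainder, the bound $\mathbb{P}(\{g_l>\varepsilon\})\le\mathbb{P}(\Omega\setminus\tilde\Omega)+\mathbb{P}(\tilde\Omega')\le\varepsilon$ gives, by the definition of $\rho_K^\mathcal{L}$, that $\rho_K^\mathcal{L}(x_{\alpha(\eta_{k^j_l})})\le\varepsilon$ for $l\ge l_0$, so $\limsup_l\rho_K^\mathcal{L}(x_{\alpha(\eta_{k^j_l})})\le\varepsilon$. Since every term of the worst-case subsequence, and hence of this sub-subsequence, satisfies $\rho_K^\mathcal{L}\ge\theta/2$, one has $\theta/2\le\liminf_l\rho_K^\mathcal{L}(x_{\alpha(\eta_{k^j_l})})\le\varepsilon$; as $\varepsilon>0$ was arbitrary, $\theta=0$, i.e. $\lim_{\eta\to0}\rho_K^\mathcal{L}(R_\alpha(y^\eta))=0$. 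The ``in particular'' statement is then immediate from \eqref{def:pseudometric}, since $\rho_K^\mathcal{L}(R_\alpha(y^{\eta_1}),R_\alpha(y^{\eta_2}))=\max\{\rho_K^\mathcal{L}(R_\alpha(y^{\eta_1})),\rho_K^\mathcal{L}(R_\alpha(y^{\eta_2}))\}$ and both arguments tend to zero. I expect the main obstacle to be not the probabilistic bookkeeping, which is identical to the earlier theorems, but the clean invocation of the deterministic convergence-to-the-set result together with the measurability of $\inf_{x^\dag\in\mathcal{L}}d_\mathcal{X}(\cdot,x^\dag)$ needed to run Egoroff; once these are in place, the non-uniqueness difficulty that defeated Theorem \ref{thm:lifting_convergence_kyfan} genuinely disappears, because $\rho_K^\mathcal{L}$ never commits to a particular solution.
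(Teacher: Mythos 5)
Your proposal is correct and follows essentially the same route as the paper, which simply states that the proof of Theorem \ref{thm:lifting_convergence_kyfan} carries over with $\rho_K(\cdot,x^\dag)$ replaced by $\rho_K^\mathcal{L}(\cdot)$ and Egoroff's lemma adjusted for multiple solutions. Your elaboration of the details the paper leaves implicit --- that the deterministic theory yields vanishing distance to the set $\mathcal{L}$ via a subsequence argument, and that $\inf_{x^\dag\in\mathcal{L}}d_\mathcal{X}(\cdot,x^\dag)$ is $1$-Lipschitz and hence yields measurable scalar functions to which Egoroff applies --- is exactly the intended adjustment.
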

\begin{proof}
The proof follows the lines of the one of Theorem \ref{thm:lifting_convergence_kyfan} with $\rho_K(\cdot,x^\dag)$ replaced by $\rho_K^\mathcal{L}(\cdot)$. Also Lemma \ref{lem:egoroff} is easily adjusted to incorporate multiple solutions.
\end{proof}

So far we assumed that only the noise is stochastic whereas the operator $F$ and the unknown $x$ were assumed to be deterministic. In \cite{HOF06,Hof_diss_book} general stochastic  Inverse Problems
\[
F(x(\omega),\omega)=y(\omega)
\]
were considered. It was shown how deterministic conditions such as source conditions can be incorporated into the stochastic setting by assuming that the deterministic conditions hold with a certain probability. However, additional conditions may occur when lifting these in order to ensure the deterministic requirements up to a certain probability. Since this is easier seen given an example, we move the discussion of the complete stochastic formulation in the next section. Although we will address only one particular example, the technique can be applied to general approaches.

\subsection{Fully stochastic Inverse Problems}\label{ssec:ex_hofinger}
Due to the possible multiplicity of stochastic conditions which might appear in this context it seems not possible to develop a lifting strategy in such a general fashion as in the previous section. We will therefore consider two classical examples, namely nonlinear Tikhonov regularization and Landweber's method for nonlinear Inverse Problems. The theory is taken completely from \cite{HOF06,Hof_diss_book}.

\subsubsection{Nonlinear Tikhonov Regularization} 
We seek the solution of a nonlinear ill-posed problem \eqref{eq:problem} via the variational problem
\[
x_\alpha^\delta=\text{argmin} ||F(x)-y^\delta||^2+\alpha||x-x^\ast||^2
\]
with a reference point $x^\ast\in X$ and given noisy data $y^\eta$ according to \eqref{eq:ad_noise} where the stochastic distribution of the noise is assumed to be known. We shall skip the general convergence theorem (which follows as in the previous section) and move to convergence rates directly. In the deterministic theory, i.e. when $y^\delta$ is the noisy data with $||y-y^\delta||\leq\delta$, we have the following theorem from \cite{EHN1996}.
\begin{theorem}\label{thm:tikh_nonlin_det}
Let $\mathcal{D}(\mathcal{F})$ be convex, $y^\delta\in\mathcal{Y}$ such that $||y-y^\delta||\leq\delta$ and $x^\dag$ denote the $x^\ast$-minimum norm solution of \eqref{eq:problem}. Furthermore let the following conditions hold.
\begin{itemize}
\item[a)] $F$ is Fr\'{e}chet-differentiable
\item[b)]There exists $\gamma\geq0$ such that $||F^\prime(x^\dag)-F^\prime(x)||\leq\gamma||x^\dag-x||$ in a sufficiently large ball $\mathcal{B}_\theta(x^\dag)\cap\mathcal{D}(F)$
\item[c)]$x^\dag-x^\ast$ satisfies the source condition $x^\dag-x^\ast=F^\prime(x^\dag)^\ast v$ for some $v\in\mathcal{Y}$.
\item[d)] The source element satisfies $\gamma||v||<1$.
\end{itemize}
Then for the choice $\alpha=c\delta$ with some fixed $c>0$ we obtain
\begin{equation}
||x^\dag-x_\alpha^\delta||\leq \frac{\delta+\alpha||v||}{\sqrt{\alpha}\sqrt{1-\gamma||v||}}=\mathcal{O}(\sqrt{\delta})\textup{ and }||F(x_\alpha^\delta)-y^\delta)||=\mathcal{O}(\delta).
\end{equation}
\end{theorem}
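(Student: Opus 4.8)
The plan is to exploit the defining minimality of $x_\alpha^\delta$ together with the source condition c) and the Lipschitz estimate b), following the classical variational argument. First I would abbreviate $A:=||F(x_\alpha^\delta)-y^\delta||$ and $B:=||x_\alpha^\delta-x^\dag||$ and, since $x_\alpha^\delta$ minimizes the Tikhonov functional, compare its value with the value at the true solution $x^\dag$. Because $F(x^\dag)=y$ and $||y-y^\delta||\leq\delta$, this immediately yields
\[
A^2+\alpha||x_\alpha^\delta-x^\ast||^2\leq \delta^2+\alpha||x^\dag-x^\ast||^2.
\]

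Next I would decompose $x_\alpha^\delta-x^\ast=(x_\alpha^\delta-x^\dag)+(x^\dag-x^\ast)$, expand the square, and cancel the $||x^\dag-x^\ast||^2$ terms to reach
\[
A^2+\alpha B^2\leq \delta^2-2\alpha\langle x_\alpha^\delta-x^\dag,\,x^\dag-x^\ast\rangle.
\]
The cross term is then rewritten via the source condition $x^\dag-x^\ast=F^\prime(x^\dag)^\ast v$ as $\langle F^\prime(x^\dag)(x_\alpha^\delta-x^\dag),v\rangle$. The essential nonlinear step is to replace the linearization $F^\prime(x^\dag)(x_\alpha^\delta-x^\dag)$ by the genuine increment $F(x_\alpha^\delta)-F(x^\dag)$: using b) through a Taylor-with-integral-remainder argument one obtains $||F(x_\alpha^\delta)-F(x^\dag)-F^\prime(x^\dag)(x_\alpha^\delta-x^\dag)||\leq\tfrac{\gamma}{2}B^2$, so the cross term is controlled by $||v||(A+\delta)+\tfrac{\gamma||v||}{2}B^2$ after inserting $||F(x_\alpha^\delta)-F(x^\dag)||\leq A+\delta$.

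Substituting this bound and moving the $\alpha\gamma||v||B^2$ contribution to the left-hand side gives $A^2+\alpha(1-\gamma||v||)B^2\leq \delta^2+2\alpha||v||(A+\delta)$; completing the square in $A$ then collapses everything to the single clean inequality
\[
(A-\alpha||v||)^2+\alpha(1-\gamma||v||)B^2\leq(\delta+\alpha||v||)^2.
\]
From here both claims drop out: discarding the first (nonnegative) summand and using $1-\gamma||v||>0$ from d) yields precisely the stated bound on $B$, and inserting $\alpha=c\delta$ gives $B=\mathcal{O}(\sqrt\delta)$; discarding instead the $B$-term gives $A\leq\delta+2\alpha||v||=\mathcal{O}(\delta)$ for the residual.

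The step I expect to be most delicate is not the algebra but the legitimacy of the Taylor estimate: condition b) only holds on the ball $\mathcal{B}_\theta(x^\dag)$, so one must first guarantee that $x_\alpha^\delta$ actually lies in that ball before the remainder bound may be invoked. I would handle this exactly as in the underlying deterministic theory, using the convergence theorem (which, as noted, follows as in the previous section) to ensure $x_\alpha^\delta\to x^\dag$, hence $x_\alpha^\delta\in\mathcal{B}_\theta(x^\dag)$ for all sufficiently small $\delta$; only then is the chain of estimates above justified.
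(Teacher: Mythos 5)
Your argument is correct, and it is essentially the standard proof of this result: the paper itself states the theorem without proof, quoting it from \cite{EHN1996}, where precisely this chain of steps (comparison of functional values at $x_\alpha^\delta$ and $x^\dag$, expansion of $\|x_\alpha^\delta-x^\ast\|^2$, rewriting the cross term via the source condition, the Taylor remainder bound $\tfrac{\gamma}{2}\|x_\alpha^\delta-x^\dag\|^2$ from b) together with convexity of $\mathcal{D}(F)$, and completion of the square to reach $(A-\alpha\|v\|)^2+\alpha(1-\gamma\|v\|)B^2\leq(\delta+\alpha\|v\|)^2$) is carried out. You also correctly identify the one genuinely delicate point, namely that the remainder estimate is only available once the minimizers are known to lie in $\mathcal{B}_\theta(x^\dag)$, which is settled by the preceding convergence theory exactly as you indicate; there is nothing to add.
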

As given in Theorem 4.6 of \cite{HOF06}, the following stochastic formulation of Theorem \ref{thm:tikh_nonlin_det} holds. 
\begin{theorem}\label{thm:tikh_nonlin_stoch}
Let $\mathcal{D}(\mathcal{F})$ be convex, let $y^\eta$ be such that $0\leq\rho_K(y,y^\eta)<\infty$ and $x^\dag$ denote the $x^\ast$-minimum norm solution of \eqref{eq:problem} for almost all $\omega$. Furthermore let the following conditions hold.
\begin{itemize}
\item[a)] $F(.,\omega)$ is Frechet-differentiable for almost all $\omega$
\item[b)] $F^\prime(\cdot,\omega)$ satisfies
\[
||F^\prime(x^\dag(\omega),\omega)-F^\prime(x,\omega)||\leq\gamma(\omega)||x^\dag(\omega)-x||
\] in a sufficiently large ball $\mathcal{B}_\theta(x^\dag(\omega))\cap\mathcal{D}(F)$
\item[c)] (smoothness) $\mathbb{P}(\Omega_{sc})=1$ where 
\[
\Omega_{sc}:=\{\omega:\exists v(\omega), x^\dag(\omega)-x^\ast(\omega)=F^\prime(x^\dag(\omega),\omega)^\ast v(\omega)\}.
\]
\item[d)] (closedness) $\mathbb{P}(\omega\in\Omega{sc}:\gamma(\omega) ||v(\omega)||>\xi)<\phi_{cl}(\xi),\quad\lim_{\xi\rightarrow1^-}\phi_{cl}(\xi)=0$
\item[e)] (decay) $\mathbb{P}(\omega\in\Omega{sc}: ||v(\omega)||>\tau)<\varphi_{de}(\tau),\quad\lim_{\tau\rightarrow\infty}\varphi_{de}(\tau)=0$.
\end{itemize}
Then for the choice $\alpha\sim\rho_K(y,y^\eta)$ we obtain
\begin{equation}\label{eq:conv_rate_tikh_stoch}
\rho_K(x^\dag,x_\alpha^\eta)\leq\inf_{\substack{\tau<\infty\\\xi\in(0,1)}}\max\left\lbrace \rho_K(y,y^\eta)+\varphi_{cl}(\xi)+\varphi_{de}(\tau),\sqrt{\rho_K(y,y^\eta)}\frac{\mathcal{O}(1+\tau)}{\sqrt{1-\xi}} \right\rbrace.
\end{equation}
\end{theorem}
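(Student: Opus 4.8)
The plan is to replicate the lifting argument behind Theorem~\ref{thm:lifting_rates_kyfan}, but now accounting for the fact that the Lipschitz constant $\gamma(\omega)$ and the source element $v(\omega)$ have become random. The strategy is to carve $\Omega$ into a ``good'' set $\tilde{\Omega}$ on which every hypothesis of the deterministic rates result (Theorem~\ref{thm:tikh_nonlin_det}) holds simultaneously with \emph{uniform} constants, together with a ``bad'' complement whose probability is controlled by the three probabilistic assumptions c)--e). On $\tilde{\Omega}$ the deterministic bound yields a pathwise estimate for $d_\mathcal{X}(x^\dag,x_\alpha^\eta)$, and the Ky Fan distance is then read off as the maximum of this pathwise error and the measure of the bad complement.

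Concretely, first I would set $\delta:=\rho_K(y,y^\eta)$ and fix admissible free parameters $\tau<\infty$, $\xi\in(0,1)$. I then introduce three exceptional sets
\begin{align*}
\Omega_1 &:= \{\omega : ||y(\omega)-y^\eta(\omega)|| > \rho_K(y,y^\eta)\}, \\
\Omega_2 &:= \{\omega : \gamma(\omega)\,||v(\omega)|| > \xi\}, \\
\Omega_3 &:= \{\omega : ||v(\omega)|| > \tau\}.
\end{align*}
By the definition of the Ky Fan metric $\mathbb{P}(\Omega_1)\leq\rho_K(y,y^\eta)$, while assumptions d) and e) give $\mathbb{P}(\Omega_2)<\varphi_{cl}(\xi)$ and $\mathbb{P}(\Omega_3)<\varphi_{de}(\tau)$. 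Since the smoothness assumption c), and with it the a.s.\ validity of a) and b), holds with probability one, these contribute no further exceptional set. Setting $\tilde{\Omega}:=\Omega\setminus(\Omega_1\cup\Omega_2\cup\Omega_3)$, subadditivity yields
\[
\mathbb{P}(\Omega\setminus\tilde{\Omega})\leq \rho_K(y,y^\eta)+\varphi_{cl}(\xi)+\varphi_{de}(\tau).
\]

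Next, on $\tilde{\Omega}$ all deterministic hypotheses hold with $||y-y^\eta||\leq\delta$, $1-\gamma||v||\geq 1-\xi>0$ and $||v||\leq\tau$, so Theorem~\ref{thm:tikh_nonlin_det} applies pathwise with the choice $\alpha=c\delta\sim\rho_K(y,y^\eta)$. Inserting these uniform bounds into $\frac{\delta+\alpha||v||}{\sqrt{\alpha}\sqrt{1-\gamma||v||}}$ and using $\alpha\sim\delta$ gives
\[
d_\mathcal{X}(x^\dag(\omega),x_\alpha^\eta(\omega))\leq \sqrt{\rho_K(y,y^\eta)}\,\frac{\mathcal{O}(1+\tau)}{\sqrt{1-\xi}}\qquad\forall\,\omega\in\tilde{\Omega},
\]
which is exactly the second entry in the asserted maximum. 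Finally I would combine the two pieces through the definition of the Ky Fan metric: writing $A$ for the pathwise bound above and $B:=\rho_K(y,y^\eta)+\varphi_{cl}(\xi)+\varphi_{de}(\tau)$ for the bad-set measure, and taking $\varepsilon=\max\{A,B\}$, one has $\mathbb{P}(d_\mathcal{X}(x^\dag,x_\alpha^\eta)>\varepsilon)\leq\mathbb{P}(\Omega\setminus\tilde{\Omega})\leq B\leq\varepsilon$, so $\rho_K(x^\dag,x_\alpha^\eta)\leq\max\{A,B\}$. As $\tau$ and $\xi$ were arbitrary, passing to the infimum over them delivers \eqref{eq:conv_rate_tikh_stoch}.

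The main obstacle I anticipate is not any single estimate but the bookkeeping needed to guarantee that the deterministic bound holds with uniform constants across all of $\tilde{\Omega}$ at once: the three exceptional events must be set up so that their complements \emph{jointly} enforce $||y-y^\eta||\leq\delta$, $\gamma||v||\leq\xi$ and $||v||\leq\tau$ on one common set, and one must check that $x^\dag(\omega)$ and $v(\omega)$ are chosen measurably so that $\Omega_2,\Omega_3$ are genuinely measurable. A secondary subtlety is the role of $\varphi_{cl}$: because it vanishes only as $\xi\to1^-$, pushing $\xi$ toward $1$ to shrink the error constant $1/\sqrt{1-\xi}$ simultaneously worsens the probability term $\varphi_{cl}(\xi)$, which is precisely why the result is phrased as an infimum of a maximum rather than as a single closed-form rate.
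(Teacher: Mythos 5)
Your proposal is correct and follows essentially the same route as the paper's own proof: both carve out the exceptional events for the noise, the closedness condition and the decay condition, bound the bad set by $\rho_K(y,y^\eta)+\varphi_{cl}(\xi)+\varphi_{de}(\tau)$, apply the deterministic bound $\frac{\delta+\alpha\|v\|}{\sqrt{\alpha}\sqrt{1-\gamma\|v\|}}$ pathwise with the uniform substitutions $\|v\|\leq\tau$, $\gamma\|v\|\leq\xi$, and read off the Ky Fan distance as the maximum before taking the infimum over $\xi$ and $\tau$. Your version merely makes the set-theoretic bookkeeping and the measurability caveat more explicit than the paper does.
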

\begin{proof}
We have $||y-y^\eta||\leq\rho_K(y,y^\eta)$ with probability $1-\rho_k(y,y^\eta)$. Now fix $\xi<1$ and $0<\tau<\infty$. Then with probability $1-(\varphi_{cl}(\xi)+\varphi_{de}(\tau))$ conditions d) and e) are fulfilled. Thus for the corresponding values of $\omega$ we can apply Theorem \ref{thm:tikh_nonlin_det} and obtain
\[
||x^\dag(\omega)-x_\alpha^\eta(\omega)||\leq \frac{\rho_K(y,y^\eta)+\alpha\tau}{\sqrt{\alpha}\sqrt{1-\xi}}
\]
or, fixing the parameter $\alpha\sim\rho_K(y,y^\eta)$, 
\[
||x^\dag(\omega)-x_\alpha^\eta(\omega)||\leq \sqrt{\rho_K(y,y^\eta)}\frac{\mathcal{O}(1+\tau)}{\sqrt{1-\xi}}.
\]
This estimate holds on a set with probability greater or equal $1-(\rho_K(y,y^\eta)+\varphi_{cl}(\xi)+\varphi_{de}(\tau))$. The Ky Fan distance can therefore be bounded as
\[
\rho_K(x^\dag,x_\alpha^\eta)\leq\max\left\lbrace \rho_K(y,y^\eta)+\varphi_{cl}(\xi)+\varphi_{de}(\tau),\sqrt{\rho_K(y,y^\eta)}\frac{\mathcal{O}(1+\tau)}{\sqrt{1-\xi}} \right\rbrace.
\]
This estimate is valid for arbitrary choices of $\xi$ and $\tau$ above, therefore we may bound the Ky fan distance of $x^\dag$ and $x_\alpha^\eta$ by taking the infimum with respect to $\xi$ and $\tau$.
\end{proof}
The core principle of the lifting strategy is to ensure that there exists a subset $\tilde{\Omega}\subset(\Omega)$ such that all deterministic assumptions hold with probability one on $\tilde{\Omega}$. This may lead to the introduction of new conditions such as the decay condition in Theorem \ref{thm:tikh_nonlin_stoch}. Namely, since $\gamma(\omega)$ and $||v(\omega)||$ may vary with $\omega$, it may be possible that for a sequence $\{\omega_k\}_{k\in\mathbb{N}}$ $\gamma(\omega_k)\rightarrow0$ and $||v(\omega_k)||\rightarrow\infty$ such that still for all $k\in\mathbb{N}$ $\gamma(\omega(k))||v(\omega_k)||<1$. In this case the parameter $\tau$ cannot be treated as a constant in the convergence rate, but it influences it to a significant degree. The decay condition had to be imposed in order to control the growth of $\tau$. It is, however, possible to avoid condition e) by imposing other conditions. For example, one could require that $\gamma(\omega)$ is bounded below by some $0<c<1$. In this case condition d) implies e). A more detailed discussion is given in \cite{HOF06}.

Accordingly, in order to lift other deterministic convergence rate results into the fully stochastic setting, a careful examination of the conditions necessary for convergence in the stochastic setting, understanding their cross-connections and dependencies is important. However, once the conditions have been translated to the stochastic setting, convergence rates follow immediately using the Ky Fan metric. We will close this example by showing how particular choices of the stochastic parameters in Theorem \ref{thm:tikh_nonlin_stoch} influence the convergence rate. To this end, we cite Remark 4.8 of \cite{HOF06}.

Let in the first examples the operator be deterministic, i.e., $F(\cdot,\omega)=F(\cdot)$ where $\gamma(\omega)=\gamma=1$.

First consider the case that $||v||\in U[0,1]$, i.e., it is uniformly distributed on the interval $[0,1]$. We therefore have $\varphi_{cl}(\xi)=1-\xi$, as well as $\varphi_{de}=0$ for $\tau>1$. Thus Theorem \ref{thm:tikh_nonlin_stoch} implies
\[
\rho_K(x^\dag,x_\alpha^\eta)\leq\inf_{0<\alpha<\infty}\inf_{\xi\in(0,1)}\max\left\lbrace \rho_K(y,y^\eta)+1-\xi,\sqrt{\rho_K(y,y^\eta)}\frac{\rho_K(y,y^\eta)+\alpha}{\alpha\sqrt{1-\xi}} \right\rbrace
\]
which gives for $\alpha\sim\rho_K(y,y^\eta)$ the optimal rate
\[
\rho_K(x^\dag,x_\alpha^\eta)=\mathcal{O}(\rho_K(y,y^\eta)^{1/3}).
\]
For the second case suppose that $\varphi_{de}(\tau)=c\tau^{-e}$ for some exponent $e>0$. Since now we do not have $\varphi_{cl}(\xi)\rightarrow0$, but $\varphi_{cl}\geq c>0$ we obtain
\[
\rho_K(x^\dag,x_\alpha^\eta)\leq\inf_{0<\alpha<\infty}\inf_{\substack{t<\infty\\\xi\in(0,1)}}\max\left\lbrace c+c\tau^{-e},\sqrt{\rho_K(y,y^\eta)}\frac{\rho_K(y,y^\eta)+\alpha}{\alpha\sqrt{1-\xi}} \right\rbrace.
\]
Since the right hand side does not converge to zero we do not obtain a convergence rate anymore. However, convergence itself still follows from Theorem \ref{thm:lifting_convergence_kyfan}.

Finally, consider the case when both d) and e) from Theorem \ref{thm:tikh_nonlin_stoch} influence the convergence behavior, because $F$ is stochastic with varying $\gamma(\omega)$. For instance in the case the for some $\omega\in U[0,1]$ we have $x^\dag(\omega)=\omega x^\dag$ and $\gamma(\omega)=1-\omega$, we find that $\varphi_{cl}(\xi)=1-\xi$ and $\varphi_{de}(\tau)=c/(1+\tau)$ are compatible realizations of $\varphi_{cl}(\cdot)$ and $\varphi_{de}(\cdot)$. With this one can show 
\[
\rho_K(x^\dag,x_\alpha^\eta)=\mathcal{O}(\rho_K(y,y^\eta)^{1/4})
\]
under the parameter choice $\alpha\sim\rho_K(y,y^\eta)^{5/4}$. From the given examples it is evident that the convergence speed is heavily influenced by the conditions d) and e) in Theorem \ref{thm:tikh_nonlin_stoch}. Therefore, although the general formula for the convergence rate \eqref{eq:conv_rate_tikh_stoch} may suggest that the convergence rate is close to the deterministic one, it may be significantly slower due to the additional stochastic properties.

\subsubsection{Nonlinear Landweber iteration}
As before we seek the solution of a nonlinear ill-posed problem \eqref{eq:problem} given noisy data $y^\eta$ according to \eqref{eq:ad_noise} where the stochastic distribution of the noise is assumed to be known. Landweber's method can be seen as a descent algorithm for $||F(x)-y^\delta||^2$ and is defined via the iteration
\begin{equation}\label{eq:lw_det}
x_{k+1}^\delta=x_{k}^\delta-\gamma F^\prime(x_k^\delta)(F(x_k^\delta)-y^\delta),\quad k=1,2,\dots,
\end{equation}
where $\gamma>0$ is an appropriately chosen stepsize and $x_0^\delta$ an initial guess. Landweber's method constitutes a regularization method if it is stopped early enough \cite{EHN1996}. In the deterministic theory, i.e. when $y^\delta$ is the noisy data with $||y-y^\delta||\leq\delta$, we have the following theorem from \cite{EHN1996} for convergence rates of the Landweber method.
\begin{theorem}\label{thm:lw_nonlin_det}
Let $\mathcal{D}(\mathcal{F})$ be convex, $y^\delta\in\mathcal{Y}$ such that $||y-y^\delta||\leq\delta$ and $x^\dag$ denote the $x^\ast$-minimum norm solution of \eqref{eq:problem}. Assume \eqref{eq:problem} has a solution in $\mathcal{B}_\vartheta(x^\ast)$. Furthermore let the following conditions hold on $\mathcal{B}_{2\vartheta}(x^\ast)$.
\begin{itemize}
\item[a)] $F$ is Frechet-differentiable with $||F^\prime(x)||\leq 1$ and
\[
||F(x)-F(x^\dag)-F^\prime(x^\dag)(x-x^\dag)||\leq\zeta||F(x)-F(x^\dag)||,\quad 0<\zeta<\frac{1}{2}
\]
\item[b)]$F^\prime(x)=R_xF^\prime(x^\dag)$  where the bounded linear operators $R$ satisfy $||R_x-I||\leq C||x-x^\dag||$
\item[c)]$x^\dag-x^\ast$ satisfies the source condition $x^\dag-x^\ast=(F^\prime(x^\dag)^\ast F^\prime(x^\dag))^\nu v$ for some $v\in\mathcal{Y}$ and $0<\nu\leq\frac{1}{2}$.
\end{itemize}
Let $||v||$ be sufficiently small. Then, if the regularization parameter is stopped according to the discrepancy principle, i.e., at the unique index $k_\ast$ for which for the first time
\[
||F(x_{k})-y^\delta||\leq\hat\tau\delta
\]
with $\hat\tau>2\frac{1+\zeta}{1-2\zeta}>2$, we obtain
\begin{equation}
||x^\dag-x_{k_\ast}^\delta||\leq c||v||^{1/(2\nu+1)}\delta^{2\nu/(2\nu+1)}.
\end{equation}
\end{theorem}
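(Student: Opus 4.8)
The plan is to follow the classical two-stage analysis for the nonlinear Landweber iteration. Write $e_k := x_k^\delta - x^\dagger$ for the iteration error and $r_k := F(x_k^\delta) - y^\delta$ for the residual. The first stage shows that, as long as the discrepancy principle has not yet triggered, $\|e_k\|$ decreases monotonically and the stopping index $k_\ast$ is finite; the second stage upgrades this to the quantitative rate by exploiting the source condition c) together with the factorization b).

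First I would derive the basic descent inequality. Since $x_{k+1}^\delta = x_k^\delta + \gamma F'(x_k^\delta)^\ast(y^\delta - F(x_k^\delta))$, expanding the square gives
\[
\|e_{k+1}\|^2 = \|e_k\|^2 - 2\gamma\langle r_k, F'(x_k^\delta)e_k\rangle + \gamma^2\|F'(x_k^\delta)^\ast r_k\|^2 .
\]
By $\|F'(x_k^\delta)\|\leq 1$ from a) and the usual scaling $\gamma\leq 1$, the last term is at most $\gamma\|r_k\|^2$. For the inner product I would decompose $F'(x_k^\delta)e_k$ into the residual $F(x_k^\delta)-y$ minus the linearization defect $F(x^\dagger)-F(x_k^\delta)-F'(x_k^\delta)(x^\dagger-x_k^\delta)$, bound that defect by $\zeta\|F(x_k^\delta)-y\|$ using the tangential-cone condition in a), and pass from $y$ to $y^\delta$ at the expense of $\delta$. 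Collecting terms yields
\[
\|e_{k+1}\|^2 - \|e_k\|^2 \leq 2\gamma\|r_k\|\Bigl((1+\zeta)\delta - (\tfrac{1}{2}-\zeta)\|r_k\|\Bigr),
\]
whose right-hand side is strictly negative exactly while $\|r_k\| > \hat\tau\delta$ with $\hat\tau > 2(1+\zeta)/(1-2\zeta)$. This gives monotone decrease of $\|e_k\|$ for $k<k_\ast$, keeps all iterates inside the ball $\mathcal{B}_{2\vartheta}(x^\ast)$ where the hypotheses are posed, and, after summation, bounds $\sum_{k<k_\ast}\|r_k\|^2$ by a constant times $\|e_0\|^2$; in particular $k_\ast\hat\tau^2\delta^2 = \mathcal{O}(\|e_0\|^2)$, so $k_\ast$ is finite.

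For the rate I would compare the nonlinear iteration with the linear Landweber iteration for $A:=F'(x^\dagger)$. Using the factorization $F'(x_k^\delta)=R_{x_k^\delta}A$ from b), each nonlinear step equals the linear Landweber step for $A$ plus a perturbation controlled by $\|R_{x_k^\delta}-I\|\leq C\|e_k\|$ and by the defect from a). For the unperturbed linear iteration the source condition $x^\dagger-x^\ast=(A^\ast A)^\nu v$ and the spectral estimate $\sup_{\lambda\in[0,1]}(1-\lambda)^k\lambda^\nu\leq c_\nu(k+1)^{-\nu}$ give an approximation error of order $(k+1)^{-\nu}\|v\|$, while the propagated data error is of order $\sqrt{k}\,\delta$. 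The decisive step is an induction showing that the accumulated nonlinear perturbation is of strictly higher order, so that
\[
\|e_k\| = \mathcal{O}\bigl((k+1)^{-\nu}\|v\| + \sqrt{k}\,\delta\bigr)
\]
persists for the nonlinear iterates; here the smallness of $\|v\|$ (hence of $\|e_0\|$ and of every $\|e_k\|$) is used to absorb the factors $C\|e_k\|$. Minimizing the two-term bound over $k$ gives an optimum at $k\sim(\|v\|/\delta)^{2/(2\nu+1)}$, and the value there is precisely $\|v\|^{1/(2\nu+1)}\delta^{2\nu/(2\nu+1)}$.

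It remains to verify that the a posteriori index $k_\ast$ produced by the discrepancy principle has the same order as this a priori optimal index, so that the rate is actually attained at $k_\ast$. The stopping condition $\|r_{k_\ast-1}\|>\hat\tau\delta$ prevents $k_\ast$ from being too large, while a residual estimate of the form $\|r_k\|=\mathcal{O}((k+1)^{-(\nu+1/2)}\|v\|+\delta)$ prevents it from being too small, pinning $k_\ast$ to the correct order. I expect the main obstacle to be the rate stage: one must run a single induction that simultaneously keeps every iterate in $\mathcal{B}_{2\vartheta}(x^\ast)$, controls the residuals, and bounds the cumulative nonlinearity error so that it stays subordinate to the linear spectral terms. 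This is where conditions a) and b) and the smallness of $\|v\|$ are jointly essential and where the bookkeeping is most delicate.
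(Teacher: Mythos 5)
The paper offers no proof of this theorem at all: it is quoted verbatim as a known deterministic result from the cited monograph of Engl, Hanke and Neubauer (originally due to Hanke, Neubauer and Scherzer), so there is nothing in the paper to compare your argument against. Your two-stage outline --- the descent inequality from the tangential-cone condition giving monotonicity and finiteness of $k_\ast$, followed by the inductive comparison with linear Landweber for $F^\prime(x^\dag)$ under the source condition, with $\|v\|$ small used to absorb the nonlinear perturbation and the discrepancy index pinned to the order $(\|v\|/\delta)^{2/(2\nu+1)}$ --- is precisely the structure of that classical proof; the only caveat is that the induction you identify as the delicate step is indeed where essentially all of the work in the cited source lies, and your proposal asserts rather than executes it.
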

We can obtain a stochastic version of Theorem \ref{thm:lw_nonlin_det} in the same way and with the same techniques used to show that Theorem \ref{thm:tikh_nonlin_stoch} followed from Theorem \ref{thm:tikh_nonlin_det}. 
\begin{theorem}\label{thm:lw_nonlin_stoch}
Let $\mathcal{D}(\mathcal{F})$ be convex, $y^\eta\in\mathcal{Y}$ be given with $\rho_K(y,y^\eta)$ and let $x^\dag(\omega)$ denote the $x^\ast$-minimum norm solution of \eqref{eq:problem}. Assume \eqref{eq:problem} has a solution in $\mathcal{B}_\vartheta(x^\ast(\omega))$ for almost all $\omega$. Furthermore let the following conditions hold on $\mathcal{B}_{2\vartheta}(x^\ast)$.
\begin{itemize}
\item[a)]$F^\prime(x,\omega)=R_{x,\omega}F^\prime(x^\dag(\omega),\omega)$  where for almost all $\omega$ the set $\{R_{x,\omega}:x\in\mathcal{B}_{\vartheta}(x^\ast)\}$ describes a family of bounded linear operators with 
\[
||R_{x,\omega}-I||\leq C(\omega)||x-x^\dag(\omega)||
\]
\item[b)]$x^\dag-x^\ast$ satisfies the source condition 
\[
x^\dag(\omega)-x^\ast(\omega)=(F^\prime(x^\dag(\omega),\omega)^\ast F^\prime(x^\dag(\omega),\omega))^\nu v(\omega)
\] for some $v(\omega)\in\mathcal{Y}$ and $0<\nu\leq\frac{1}{2}$.
\item[c)] $\mathbb{P}(\omega\in\Omega:C(\omega)||v(\omega)||>c\}<\varphi_{cl}(c)$
\item[d)] $\mathbb{P}(\omega\in\Omega: ||v(\omega)||>\tau)<\varphi(\tau)$
\end{itemize}
Then, if the regularization parameter is stopped according to the discrepancy principle, i.e., at the unique index $k_\ast$ for which for the first time
\[
||F(x_{k})-y^\eta||\leq\hat\tau\rho_K(y,y^\eta)
\]
with $\hat\tau>2$, we obtain for $c_0>0$ sufficiently small the rate
\begin{align*}
&\rho_K(x^\dag,x_{k_\ast}^\eta)\leq \\&\inf_{0<\tau\leq\infty}\max\left\lbrace \rho_K(y,y^\eta)+\varphi_{cl}(c_0)+\varphi_{de}(\tau),\tilde{c} \tau^{1/(2\nu+1)}\rho_K(y,y^\eta)^{2\nu/(2\nu+1)}\right\rbrace
\end{align*}
where the constant $\tilde{c}$ depends on $\nu$ only.
\end{theorem}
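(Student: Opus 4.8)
The plan is to mirror exactly the lifting argument that derived Theorem \ref{thm:tikh_nonlin_stoch} from Theorem \ref{thm:tikh_nonlin_det}, since the deterministic Landweber rate in Theorem \ref{thm:lw_nonlin_det} has the same structure: a bound in $\delta$ modulated by a power of $||v||$. The core idea is to decompose $\Omega$ into a ``good'' part on which every hypothesis of the deterministic Theorem \ref{thm:lw_nonlin_det} holds and a ``bad'' part of controllably small probability, apply the deterministic rate pointwise on the good part, and read off the Ky Fan distance directly from the definition \eqref{def:kyfan}.

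First I would use the definition of the Ky Fan metric to note that $||y-y^\eta||\leq\rho_K(y,y^\eta)$ holds on a set of probability at least $1-\rho_K(y,y^\eta)$; on this set the effective deterministic noise level is $\delta=\rho_K(y,y^\eta)$, which is exactly the threshold appearing in the discrepancy stopping rule $||F(x_k)-y^\eta||\leq\hat\tau\rho_K(y,y^\eta)$. Next, for the fixed small parameter $c_0$ and a cut-off $\tau$, I would invoke the closedness condition c) and the decay condition d) to get $C(\omega)||v(\omega)||\leq c_0$ on a set of probability at least $1-\varphi_{cl}(c_0)$ and $||v(\omega)||\leq\tau$ on a set of probability at least $1-\varphi_{de}(\tau)$. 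Intersecting these with the noise set, all deterministic hypotheses — in particular the requirement that $||v||$ be sufficiently small, guaranteed here by choosing $c_0$ small in c) — hold simultaneously on a set $\tilde\Omega$ with $\mathbb{P}(\tilde\Omega)\geq 1-(\rho_K(y,y^\eta)+\varphi_{cl}(c_0)+\varphi_{de}(\tau))$.

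On $\tilde\Omega$ I would then apply Theorem \ref{thm:lw_nonlin_det} with $\delta=\rho_K(y,y^\eta)$ to obtain the pointwise estimate
\[
||x^\dag(\omega)-x_{k_\ast}^\eta(\omega)||\leq c\,||v(\omega)||^{1/(2\nu+1)}\rho_K(y,y^\eta)^{2\nu/(2\nu+1)}\leq\tilde{c}\,\tau^{1/(2\nu+1)}\rho_K(y,y^\eta)^{2\nu/(2\nu+1)},
\]
where the bound $||v(\omega)||\leq\tau$ converts the $\omega$-dependent factor into the explicit $\tau^{1/(2\nu+1)}$ and $\tilde{c}$ absorbs the constant $c$, which depends only on $\nu$. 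Writing $R:=\tilde{c}\,\tau^{1/(2\nu+1)}\rho_K(y,y^\eta)^{2\nu/(2\nu+1)}$ and $P:=\rho_K(y,y^\eta)+\varphi_{cl}(c_0)+\varphi_{de}(\tau)$, the estimate shows $\mathbb{P}(||x^\dag-x_{k_\ast}^\eta||>R)\leq P$. Choosing $\varepsilon=\max\{R,P\}$ gives $\mathbb{P}(||x^\dag-x_{k_\ast}^\eta||>\varepsilon)\leq\varepsilon$, so by \eqref{def:kyfan} one has $\rho_K(x^\dag,x_{k_\ast}^\eta)\leq\max\{R,P\}$. Since $\tau$ was arbitrary, taking the infimum over $0<\tau\leq\infty$ yields the claim.

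The main obstacle I expect is the bookkeeping of the smallness hypothesis. The deterministic theorem asks $||v||$ to be sufficiently small (and implicitly that the tangential-cone and scaling constants behave), and in the stochastic setting this smallness must be reorganized into the single product $C(\omega)||v(\omega)||$ controlled by c), while the separate decay condition d) on $||v(\omega)||$ alone supplies the rate factor $\tau^{1/(2\nu+1)}$. Keeping these two roles distinct — \emph{smallness for applicability versus magnitude for the rate} — and verifying that $\tilde c$ depends on $\nu$ only and not on $\omega$ is the delicate part; the remainder is the same measure-splitting as in Theorem \ref{thm:tikh_nonlin_stoch}.
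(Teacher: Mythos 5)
Your proposal is correct and follows exactly the route the paper intends: the paper gives no separate proof of this theorem but states that it follows "in the same way and with the same techniques" as Theorem \ref{thm:tikh_nonlin_stoch} from Theorem \ref{thm:tikh_nonlin_det}, i.e., the measure-splitting of $\Omega$ via the Ky Fan bound on the noise and conditions c) and d), pointwise application of the deterministic rate of Theorem \ref{thm:lw_nonlin_det}, and reading off the Ky Fan distance as $\max\{R,P\}$ before taking the infimum over $\tau$. Your explicit separation of the roles of c) (smallness for applicability) and d) (magnitude entering the rate) matches the paper's discussion of the analogous decay condition in the Tikhonov case.
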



In the fully stochastic setting, the source condition b) from Theorem \ref{thm:lw_nonlin_stoch} need not hold with constant exponent $\nu$ for all $\omega\in\Omega$. There are at least two situations which lead to the power $\nu$ being a stochastic quantity as well, i.e., it holds
\begin{equation}\label{eq:sourcecond_stoch}
x^\dag(\omega)=(F^\prime(x^\dag(\omega),\omega)^\ast F^\prime(x^\dag(\omega),\omega))^{\nu(\omega)}v(\omega)
\end{equation}
with $0<\nu(\omega)\leq\frac{1}{2}$.

In the first case all solutions $x^\dag(\omega)$ come from some initial element $v(\omega)=v\in \mathcal{Y}$, with small $\mathcal{Y}$-norm. Some randomly smoothing operator is acting on this element and generates $x^\dag(\omega)$. (One could for instance think of some kind of evolution process, e.g., a diffusion process that is applied to some initial value $v$). The smoothness of $x^\dag(\omega)$ is therefore random.

Secondly, $x^\dag$ may be a deterministic element satisfying a certain smoothness condition. The data $y(\omega)$ is generated by applying a forward operator $F(\cdot,\omega)$ with random smoothness properties. If the realization of $F(\cdot,\omega)$ is strongly smoothing, this corresponds to a source condition with small $\nu(\omega)$, if $F(\cdot,\omega)$ is weakly smoothing we have the source condition with larger $\nu(\omega)$.

The following proposition shows the convergence rate that results from the source condition \eqref{eq:sourcecond_stoch} for the case that $\nu(\omega)$ is uniformly distributed on the interval $[0,\frac{1}{2}]$.

\begin{theorem}
Let all conditions of Theorem \ref{thm:lw_nonlin_stoch} hold except for b) and d). Let $x^\dag(\omega)$ satisfy \eqref{eq:sourcecond_stoch} where $||v(\omega)||$ is uniformly bounded and sufficiently small. Let $\nu(\omega)$ be uniformly distributed on the interval $[0,\frac{1}{2}]$, i.e.,
\[
\mathbb{P}\left(\omega\in\Omega: 0\leq \nu(\omega)<\nu\leq\frac{1}{2}\right)=2\nu.
\]
Then the approximations $x_{k_\ast}^\eta$ obtained by Landweber's method satisfy the convergence rate
\begin{equation}\label{eq:conv_rate_stoch_nu}
\rho_K(x^\dag,x_{k_\ast}^\eta)=\mathcal{O}\left(\frac{W(-\log(\rho_K(y,y^\delta))}{-\log(\rho_K(y,y^\delta)}\right)
\end{equation}
where $W$ denotes the Lambert W-function, defined by $W(z)e^{W(z)}=z$, see \cite{LAMW1996}.
\end{theorem}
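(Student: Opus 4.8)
The plan is to lift the deterministic Landweber rate pointwise in $\omega$ and then read off the Ky Fan distance directly from its definition, using the explicit law of $\nu(\omega)$. Write $\rho:=\rho_K(y,y^\eta)$ and $L:=-\log\rho$, and recall that by definition of the Ky Fan metric $\mathbb{P}(\|y-y^\eta\|>\rho)\le\rho$. Since $\|v(\omega)\|$ is uniformly bounded and sufficiently small while the remaining hypotheses of Theorem~\ref{thm:lw_nonlin_stoch} hold with probability one, the closedness condition c) can be realized with $\varphi_{cl}(c_0)=0$ for a suitable finite $c_0$; hence on a set $\tilde\Omega$ of probability at least $1-\rho$ all assumptions of the deterministic Theorem~\ref{thm:lw_nonlin_det} are met with exponent $\nu(\omega)$ and with $\delta=\rho$. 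First I would therefore invoke the deterministic rate on $\tilde\Omega$ to get
\[
d_\mathcal{X}(x^\dag(\omega),x_{k_\ast}^\eta(\omega))\le C\,\rho^{\,2\nu(\omega)/(2\nu(\omega)+1)}=:g(\nu(\omega)),
\]
the bound on $\|v(\omega)\|$ being absorbed into $C$ (as $1/(2\nu+1)\in[\tfrac12,1]$, the factor $\|v\|^{1/(2\nu+1)}$ stays bounded).

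Next I would compute the exceedance probability of $g(\nu(\omega))$. Because $\rho<1$ the exponent $2\nu/(2\nu+1)$ increases with $\nu$, so $g$ is \emph{decreasing} and $g(\nu)>\varepsilon$ is equivalent to $\nu<g^{-1}(\varepsilon)$. Solving $C\rho^{2\nu/(2\nu+1)}=\varepsilon$ gives $2\nu/(2\nu+1)=s$ with $s:=\log(C/\varepsilon)/L$, whence $g^{-1}(\varepsilon)=s/(2(1-s))$. Since $\nu(\omega)$ is uniform on $[0,\tfrac12]$, i.e. $\mathbb{P}(\nu<t)=2t$, I obtain for every $\varepsilon$ with $g^{-1}(\varepsilon)\le\tfrac12$ (equivalently $s\le\tfrac12$)
\[
\mathbb{P}\big(d_\mathcal{X}(x^\dag,x_{k_\ast}^\eta)>\varepsilon\big)\le 2\,g^{-1}(\varepsilon)+\rho=\frac{s}{1-s}+\rho.
\]

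The Ky Fan distance is then the crossing point of this decreasing bound with the diagonal, so I would solve $\mathbb{P}(d_\mathcal{X}(x^\dag,x_{k_\ast}^\eta)>\varepsilon)=\varepsilon$. As $\rho\to0$ one has $L\to\infty$ and $s\to0$, so $s/(1-s)\sim s$ and the additive $\rho$ is of lower order; the equation collapses to $\varepsilon=s=\log(C/\varepsilon)/L$, that is $L\varepsilon=\log(1/\varepsilon)+\log C$. Taking the trial value $\varepsilon=W(L)/L$ gives $L\varepsilon=W(L)$ and $\log(1/\varepsilon)=\log L-\log W(L)$, and taking logarithms in the defining identity $W(L)e^{W(L)}=L$ yields $W(L)+\log W(L)=\log L$; thus $\varepsilon=W(L)/L$ solves the balance up to the lower-order constant $\log C$, which merely rescales $L$ by a constant factor and is absorbed by the $\mathcal{O}$. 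This produces the claimed rate $\rho_K(x^\dag,x_{k_\ast}^\eta)=\mathcal{O}(W(L)/L)=\mathcal{O}\!\big(W(-\log\rho_K(y,y^\eta))/(-\log\rho_K(y,y^\eta))\big)$.

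I expect the main difficulty to be conceptual rather than computational: the rate is dictated by the small-$\nu$ tail, where the pointwise Landweber estimate degenerates to the essentially trivial bound $\rho^{0}=1$, so the delicate point is to balance this vanishing-exponent regime against its only linearly small probability. The technical crux is the inversion of $g$ and the recognition that the transcendental balance $L\varepsilon=\log(1/\varepsilon)$ is solved by the Lambert $W$-function; checking that the constant $C$ and the noise-failure probability $\rho$ are genuinely of lower order then finishes the argument.
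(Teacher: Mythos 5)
Your proposal is correct and follows essentially the same route as the paper: lift the deterministic rate $\rho^{2\nu/(2\nu+1)}$ pointwise, balance it against the probability $2\nu$ of the exponent being worse, and recognize the resulting transcendental balance $L\varepsilon=\log(1/\varepsilon)$ as solved by $\varepsilon=W(L)/L$. Your explicit treatment of the exceedance probability and of the $s/(1-s)\sim s$ asymptotics is in fact a cleaner justification of the step the paper handles via its simplified equation plus a computer-assisted correction term.
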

\begin{proof}
As can be seen from the proof of Theorem 3.1 in \cite{HNS1995}, the requirement ``$||v||$ sufficiently small'', becomes stronger, the larger $\nu$ is. Supposing that $||v||$ in \eqref{eq:sourcecond_stoch} is sufficiently small for the case $\nu=\frac{1}{2}$, implies therefore that also the convergence conditions for $\nu\leq\frac{1}{2}$ are satisfied.

Secondly we observe that the convergence rate in Theorem \ref{thm:lw_nonlin_stoch} contains a constant $\tilde{c}$ that depends on $\nu$. Although it is difficult to state an explicit formula for $\tilde{c}$, investigation of \cite{HNS1995} shows, that $\tilde{c}(\nu)$ attains its maximum value when $\nu=\frac{1}{2}$. 

After these observations we start with the actual derivation of the convergence rate. For the sake of simplicity we assume that all appearing constants are just equal to 1. Furthermore we may assume that $\varphi_{cl}(\cdot)$ and $\varphi_{de}(\cdot)$ both vanish. Asymptotically, for given $\omega$ we therefore have the estimate
\[
||x^\dag(\omega)-x_{k_\ast}^\eta(\omega)||\leq \rho_K(y,y^\eta).
\]
Measuring the distance in the Ky Fan metric we must, since we assumed that $\nu(\omega)$ is as in \eqref{eq:sourcecond_stoch}, solve the equation
\begin{equation}
\rho_K(y,y^\delta)^{\frac{2\nu}{2\nu+1}}=2\nu
\end{equation}
for $\nu$. We first consider the simplified equation
\[
\rho_K(y,y^\delta)^{2\tilde\nu}=2\tilde\nu
\]
which is solved by
\[
\tilde\nu(\rho_K(y,y^\delta))=\frac{W(-\log \rho_K(y,y^\delta))}{-2\log \rho_K(y,y^\delta)}.
\]
In the following we show that the above approximate solution is sufficiently accurate. Therefore we construct a better estimate via the ansatz $\nu(\rho_K(y,y^\delta))=\tilde\nu(\rho_K(y,y^\delta))(1+\varepsilon(\tilde\nu(\rho_K(y,y^\delta)))$. The original equation then contains the term $2\tilde\nu+3\tilde\nu\varepsilon+1$. Neglecting the quadratic part, we can replace this term with $2\tilde\nu+1$, and obtain an equation that MATHEMATICA can solve for $\varepsilon(\tilde\nu)$. The solution for the correction term is given as
\[
\varepsilon(\tilde\nu)=\frac{\log(\tilde\nu)+(2\tilde\nu+1)W\left(-\frac{\log(\tilde\nu)}{2\tilde\nu^2+\tilde\nu}\right)}{-\log(\tilde\nu)}
\]
and tends to zero approximately linearly in $\tilde\nu$. Thus this correction becomes small rather quickly, and we can consider the asymptotic bound in \eqref{eq:conv_rate_stoch_nu} as sufficiently accurate due to the asymptotics of the Lambert W-function.
\end{proof}

\section{Examples}\label{sec:examples}
We will now apply the theory developed in the previous section to selected deterministic regularization methods.
\subsection{Filter-based regularization methods}
Let $A$ be a linear compact operator between Hilbert spaces $\mathcal{X}$ and $\mathcal{Y}$ with singular system $\{\sigma_n,u_n,v_n\}_{n\in\mathbb{N}}$, see e.g. \cite{EHN1996}. Then, for $y\in\mathcal{D}(A)$, the generalized inverse $A^\dag$ to $A$ is given by
\begin{equation}\label{eq:gen_inverse}
A^\dag y=\sum_{\sigma_n>0} \sigma_n^{-1}\langle y,u_n\rangle v_n. 
\end{equation}
Since for compact operators the singular values approach zero, their inverse blows up and the generalized inverse yields a meaningless solution to \eqref{eq:problem} for noisy data. A popular class of regularization methods is based on the filtering of the generalized inverse. Introducing an appropriate filter function $F_\alpha(\sigma)$ depending on the regularization parameter $\alpha$ that controls the growth of $\sigma^{-1}$, the regularized solutions are defined by
\begin{equation}\label{eqn:filtered_req}
R_\alpha(y)=\sum_{\sigma_n>0} F_\alpha(\sigma)\sigma_n^{-1}\langle y,u_n\rangle v_n.
\end{equation}
Examples for filter based methods are for example the classical Tikhonov regularization, truncated singular value decomposition or Landwebers method \cite{Louis,EHN1996}. 
The regularization properties are fully determined by the filter functions. In the deterministic setting, the conditions can be found in, e.g.,\cite[Theorem 3.3.3.]{Louis}. Convergence rates can be obtained for a priori and a posteriori parameter choice rules under stricter conditions on the filter functions. We will only comment on an a priori choice here in order to keep this section short. An example of the discrepancy principle as a posteriori parameter choice is given in the next section in a different context. Using the smoothness condition
\begin{equation}\label{eq:nunorm}
x^\dagger \in \textup{range}{(A^\ast A)^{\nu/2}},\quad
\|x^\dagger\|_\nu:= \{||z||_\mathcal{X}: x^\dag=(A^\ast A)^{\nu/2}z, z\in\mathcal{N}(A)^\perp\} \leq \varrho
\end{equation}
the following theorem can be obtained.
\begin{theorem}{\cite[Theorem 3.4.3]{Louis}}\label{thm:louis_filter_optimality}
Let $y\in\textup{range}(A)$ and $||y-y^\delta||_{\mathcal{Y}}\leq\delta$. Assume that it holds $||x^\dag||_\nu\leq\varrho$ and for $0\leq\nu\leq \nu^\ast$,
\begin{align}
&\sup_{0<\sigma\leq \sigma_1}\sigma^{-1}|F_{\alpha}(\sigma)|\leq c \alpha^{-\beta}\label{eq:filter_conditions1}\\
&\sup_{0<\sigma\leq \sigma_1}|1-F_{\alpha}(\sigma)|\sigma^{\nu^\ast} 
\leq c_{\nu^\ast} \alpha^{\beta {\nu^\ast}}\label{eq:filter_conditions2},
\end{align}
where $\beta>0$ and $c,c_{\nu^\ast}$ are constants independent of $\delta$. Then with the a priori parameter choice 
\begin{equation}\label{eq:apriori_filter}
\alpha=C\left(\frac{\delta}{\varrho}\right)^{1/\beta(\nu+1)},\qquad C>0 \quad \textrm{fixed},
\end{equation}
the method induced by the filter $F_{\alpha}$ is order optimal for all $0\leq\nu\leq\nu^{\ast}$, i.e.,
\[
||x^\dag-R_\alpha y^\delta||\leq c \delta^{\frac{\nu}{\nu+1}}\varrho^{\frac{1}{\nu+1}}
\]
for some constant $c$ independent of $\delta$ and $\varrho$.
\end{theorem}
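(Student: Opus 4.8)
The plan is to diagonalize everything in the singular system and split the total error into a propagated-noise term and an approximation (bias) term,
\[
\|x^\dag - R_\alpha y^\delta\| \leq \|R_\alpha(y-y^\delta)\| + \|x^\dag - R_\alpha y\|,
\]
bounding each separately by Parseval's identity and then balancing them through the a priori choice \eqref{eq:apriori_filter}.

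For the noise term I would expand $R_\alpha(y-y^\delta)$ in the basis $\{v_n\}$; orthonormality gives
\[
\|R_\alpha(y-y^\delta)\|^2 = \sum_{\sigma_n>0}\left|F_\alpha(\sigma_n)\sigma_n^{-1}\right|^2|\langle y-y^\delta,u_n\rangle|^2 \leq \Big(\sup_{\sigma_n}\sigma_n^{-1}|F_\alpha(\sigma_n)|\Big)^2\delta^2,
\]
so that condition \eqref{eq:filter_conditions1} yields directly $\|R_\alpha(y-y^\delta)\|\leq c\,\alpha^{-\beta}\delta$.

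For the bias term I would use that $y\in\textup{range}(A)$ forces $y=Ax^\dag$, whence $\langle y,u_n\rangle=\sigma_n\langle x^\dag,v_n\rangle$; since $x^\dag$ is the minimum-norm solution it lies in $\mathcal{N}(A)^\perp$, so the $\{v_n\}$ with $\sigma_n>0$ expand it completely and
\[
x^\dag - R_\alpha y = \sum_{\sigma_n>0}(1-F_\alpha(\sigma_n))\langle x^\dag,v_n\rangle v_n.
\]
Inserting the source condition \eqref{eq:nunorm}, $x^\dag=(A^\ast A)^{\nu/2}z$ with $\|z\|\leq\varrho$, and using $(A^\ast A)^{\nu/2}v_n=\sigma_n^\nu v_n$ turns the coefficients into $\langle x^\dag,v_n\rangle=\sigma_n^\nu\langle z,v_n\rangle$, so Parseval leaves
\[
\|x^\dag - R_\alpha y\| \leq \Big(\sup_{0<\sigma\leq\sigma_1}|1-F_\alpha(\sigma)|\sigma^\nu\Big)\varrho.
\]

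The hard part will be that the filter hypothesis \eqref{eq:filter_conditions2} only controls the weight at the maximal qualification $\nu^\ast$, while the estimate above needs the bound at every intermediate $0\leq\nu\leq\nu^\ast$. I expect to close this gap by an interpolation argument: assuming, as is standard for admissible filters, that $|1-F_\alpha(\sigma)|$ is uniformly bounded by a constant, I would write
\[
|1-F_\alpha(\sigma)|\sigma^\nu = |1-F_\alpha(\sigma)|^{1-\nu/\nu^\ast}\big(|1-F_\alpha(\sigma)|\sigma^{\nu^\ast}\big)^{\nu/\nu^\ast},
\]
and \eqref{eq:filter_conditions2} then gives $\sup_\sigma|1-F_\alpha(\sigma)|\sigma^\nu\leq \tilde c\,\alpha^{\beta\nu}$, hence $\|x^\dag-R_\alpha y\|\leq\tilde c\,\alpha^{\beta\nu}\varrho$. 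It then remains to insert $\alpha=C(\delta/\varrho)^{1/(\beta(\nu+1))}$ from \eqref{eq:apriori_filter}, which is exactly the value equating $\alpha^{\beta\nu}\varrho$ with $\alpha^{-\beta}\delta$; both contributions then equal $\delta^{\nu/(\nu+1)}\varrho^{1/(\nu+1)}$ up to constants, yielding the claimed order-optimal rate.
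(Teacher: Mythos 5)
Your proof is correct and is essentially the standard argument behind the cited result (the paper itself gives no proof, deferring to Louis): the bias--variance split along the singular system, the bound $\|R_\alpha(y-y^\delta)\|\leq c\alpha^{-\beta}\delta$ from \eqref{eq:filter_conditions1}, the bound $\|x^\dag-R_\alpha y\|\leq \tilde c\,\alpha^{\beta\nu}\varrho$ from the source condition together with the interpolation of \eqref{eq:filter_conditions2}, and the balancing via \eqref{eq:apriori_filter}. The only ingredient you had to supply by hand --- the uniform bound on $|1-F_\alpha(\sigma)|$ needed for the interpolation step --- is indeed part of the definition of an admissible regularizing filter in Louis's framework (it follows from the conditions of his Theorem 3.3.3, which the paper invokes just before this statement), so your argument is complete.
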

Now we use Theorem \ref{thm:lifting_rates_kyfan} and obtain convergence rates in the Ky Fan metric.
\begin{theorem}
Let $y\in\textup{range}(A)$ and $\rho_K(y,y^\eta)$ be known. Assume that it holds $||x^\dag||_\nu\leq\varrho$ and for $0\leq\nu\leq \nu^\ast$, \eqref{eq:filter_conditions1} and \eqref{eq:filter_conditions2} hold. Then with the a priori parameter choice 
\begin{equation}\label{eq:apriori_filter_stoch}
\alpha=C\left(\frac{\rho_K(y,y^\eta)}{\varrho}\right)^{1/\beta(\nu+1)},\qquad C>0 \quad \textrm{fixed},
\end{equation}
the method induced by the filter $F_{\alpha}$ fulfills
\[
\rho_K(x^\dag,R_\alpha y^\eta)\leq c \rho_K(y,y^\eta)^{\frac{\nu}{\nu+1}}\varrho^{\frac{1}{\nu+1}}
\]
for some constant $c$ independent of $\delta$ and $\varrho$.
\end{theorem}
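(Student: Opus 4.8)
The plan is to recognize this statement as a direct instance of the lifting mechanism of Theorem \ref{thm:lifting_rates_kyfan}, case a), so that essentially no new analysis beyond bookkeeping is required. First I would record the deterministic rate supplied by Theorem \ref{thm:louis_filter_optimality}: under the filter conditions \eqref{eq:filter_conditions1}, \eqref{eq:filter_conditions2}, the smoothness \eqref{eq:nunorm}, and the a priori choice \eqref{eq:apriori_filter}, one has the deterministic bound $\|x^\dag - R_\alpha y^\delta\| \leq c\,\delta^{\nu/(\nu+1)}\varrho^{1/(\nu+1)}$. This is precisely of the form $d_\mathcal{X}(x^\dag, R_\alpha(y^\delta)) \leq \varphi(d_\mathcal{Y}(y,y^\delta))$ demanded in Theorem \ref{thm:lifting_rates_kyfan}, with the rate function
\[
\varphi(t) := c\,t^{\nu/(\nu+1)}\varrho^{1/(\nu+1)}.
\]

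Next I would verify the two hypotheses that Theorem \ref{thm:lifting_rates_kyfan} imposes on $\varphi$ and on the solution. The function $\varphi$ is monotonically increasing and continuous (hence right-continuous) on $[0,\infty)$, since the exponent $\nu/(\nu+1)$ is nonnegative; this is immediate. The uniqueness requirement is automatic in the present linear setting: for the compact operator $A$ the minimum-norm solution $x^\dag = A^\dag y$ is the unique element of $\mathcal{N}(A)^\perp$ solving \eqref{eq:problem}, so the (local) uniqueness assumption of Theorem \ref{thm:lifting_rates_kyfan} holds with probability one.

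With these verifications, case a) of Theorem \ref{thm:lifting_rates_kyfan} applies verbatim with $\delta$ replaced by $\rho_K(y,y^\eta)$; note that this substitution turns the deterministic parameter choice \eqref{eq:apriori_filter} into exactly the stochastic choice \eqref{eq:apriori_filter_stoch}. The conclusion of that theorem then yields
\[
\rho_K(x^\dag, R_\alpha y^\eta) = \mathcal{O}\!\left(\max\{\varphi(\rho_K(y,y^\eta)),\, \rho_K(y,y^\eta)\}\right).
\]
The final step is to simplify the maximum. Since $\nu^\ast$ is a finite qualification, $\nu/(\nu+1) < 1$, and therefore $\varphi(t)/t = c\,\varrho^{1/(\nu+1)}\,t^{-1/(\nu+1)} \to \infty$ as $t \to 0^+$; hence $\varphi(\rho_K(y,y^\eta)) \geq \rho_K(y,y^\eta)$ for all sufficiently small $\rho_K(y,y^\eta)$, and the maximum reduces to $\varphi(\rho_K(y,y^\eta))$. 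Substituting the explicit $\varphi$ gives the asserted rate $\rho_K(x^\dag, R_\alpha y^\eta) \leq c\,\rho_K(y,y^\eta)^{\nu/(\nu+1)}\varrho^{1/(\nu+1)}$.

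I do not expect a genuine obstacle here; the one point that deserves care is the reduction of the maximum, i.e.\ confirming that the deterministic rate decays strictly slower than linearly so that it dominates the residual term $\rho_K(y,y^\eta)$ arising from the tail probability in the Ky Fan estimate. This is exactly the \emph{decay at most linearly} observation recorded immediately after Theorem \ref{thm:lifting_rates_kyfan}, and it hinges only on $\nu/(\nu+1) < 1$.
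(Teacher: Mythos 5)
Your proposal is correct and matches the paper's own treatment: the paper gives no separate argument for this theorem, stating only that it follows by applying Theorem \ref{thm:lifting_rates_kyfan} (case a)) to the deterministic rate of Theorem \ref{thm:louis_filter_optimality}, which is exactly what you do. Your additional care in checking right-continuity of $\varphi$, uniqueness of the minimum-norm solution in the linear setting, and the reduction of the maximum via $\nu/(\nu+1)<1$ simply makes explicit what the paper leaves implicit.
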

More about filter methods in the stochastic setting including numerical examples can be found in \cite{diss}.

\subsection{Sparsity-regularization for an autoconvolution problem}
We consider an autoconvolution equation
\begin{equation}\label{eq:autoconv}
[F(x)](s)=\int_0^s x(s-t)x(t)\, dt,\qquad 0\leq s\leq 1
\end{equation}
between Hilbert spaces $\mathcal{X}=L_2[0,1]$ and $\mathcal{Y}=L_2[0,1]$ where $x\in \mathcal{D}(F)$. Such an equation is of great interest in, for example, stochastics or spectroscopy and has been analyzed in detail in \cite{GorHof}. Recently, a more complicated autoconvolution problem has emerged from a novel method to characterize ultra-short laser pulses \cite{autoconv2014,autoconv2015}. Here, we want to show the transition from the deterministic setting to the stochastic setting in a numerical example. We base our results on the deterministic paper \cite{AnzRam10}.

Using the Haar-wavelet basis, the authors of \cite{AnzRam10} reformulate \eqref{eq:autoconv} as an equation from $\ell_2$ to $\ell_2$ by switching to the space of coefficients in the Haar basis. In order to stabilize the inversion, an $\ell_1$ penalty term is used such that the task is to minimize the functional
\begin{equation}\label{eq:j_alpha_nonlin}
J_\alpha(x)=||F(x)-y^\delta||_2^2+\alpha||x||_1.
\end{equation}
The regularization parameter $\alpha$ in \eqref{eq:j_alpha_nonlin} is chosen according to the discrepancy principle. In \cite{AnzRam10}, the following formulation is used: For $1<\tau_1\leq\tau_2$ choose $\alpha=\alpha(\delta,y^\delta)$ such that
\begin{equation}\label{eq:discrepancy_nonlin}
\tau_1\delta\leq ||F(x_\alpha^\delta)-y^\delta||_2\leq\tau_2\delta
\end{equation}
holds. The authors show that this leads to a convergence of the regularized solutions against a solution of \eqref{eq:autoconv} with minimal $\ell_1$-norm of its coefficients. It was also shown that the regularization parameter fulfills
\begin{equation}\label{eq:alpha_props}
\alpha(\delta,y^\delta)\rightarrow 0,\qquad \frac{\delta^2}{\alpha(\delta,y^\delta)}\rightarrow0\quad \text{as}\quad \delta\rightarrow 0.
\end{equation}

By courtesy of Stephan Anzengruber we were allowed to use the original code for the numerical simulation in \cite{AnzRam10}.   We only changed the parts directly connected to the data noise. Namely, we replaced the deterministic error $||y-y^\delta||_2\leq\delta$ with i.i.d Gaussian noise,
\[
y^\eta=y+\epsilon,
\]
$\epsilon\sim\mathcal{N}(0,\eta^2I)$. The discretization is due to the truncation of the expansion of the functions in the Haar-basis after $m$ elements. The parameter choice \eqref{eq:discrepancy_nonlin} was realized with $\delta$ replaced by $\tau(\eta)\mathbb{E}(||\epsilon||_2)$ in accordance with Theorem \ref{thm:lifting_convergence_kyfan}. Instead of the correct expectation
\[
\mathbb{E}(||\epsilon||_2)=\frac{\eta}{\sqrt{2}}\frac{\Gamma(\frac{m+1}{2})}{\Gamma(\frac{m}{2})},
\]
see \cite{diss}, we used the upper bound 
\[
\mathbb{E}(||\epsilon||_2)\leq \eta\sqrt{m}
\]
since, as shown in this chapter, the expectation has to be ``blown up'' anyway.  In a first experiment we let $\tau(\eta)=1.3=\text{const}$. In this case, the numerical results suggest that the regularization parameter decreases too fast, i.e., $\frac{(\tau(\eta)\mathbb{E}(||\epsilon||_2))^2}{\alpha}$ does not converge to zero as the requirement in \eqref{eq:alpha_props} states; see Figure \ref{fig:conv_nonlin}. For comparison, in a second run we chose $\tau(\eta)=\sqrt{1-\log(\eta^2 2\pi m^2(\frac{e}{2})^m)}$ where $m$ is the amount of data points. This way, $\tau(\eta)\mathbb{E}(||\epsilon||_2) \propto \rho_K(y,y^\eta)$. Now $\frac{(\tau(\eta)\mathbb{E}(||\epsilon||_2))^2}{\alpha}$ converges to zero as it should be the case from \ref{eq:alpha_props}, see Figure \ref{fig:conv_nonlin}.

At this point we would like to mention that the discrepancy principle using the Ky fan distance and the deterministic one are not completely equivalent since a different way of measuring the noise is used. Typically the stochastic noise level will be smaller (it need to bound 100\% of the possible realizations) and the iteration will be stopped later than in the deterministic setup.

\begin{figure}
\begin{center}
\includegraphics[width=0.49\linewidth]{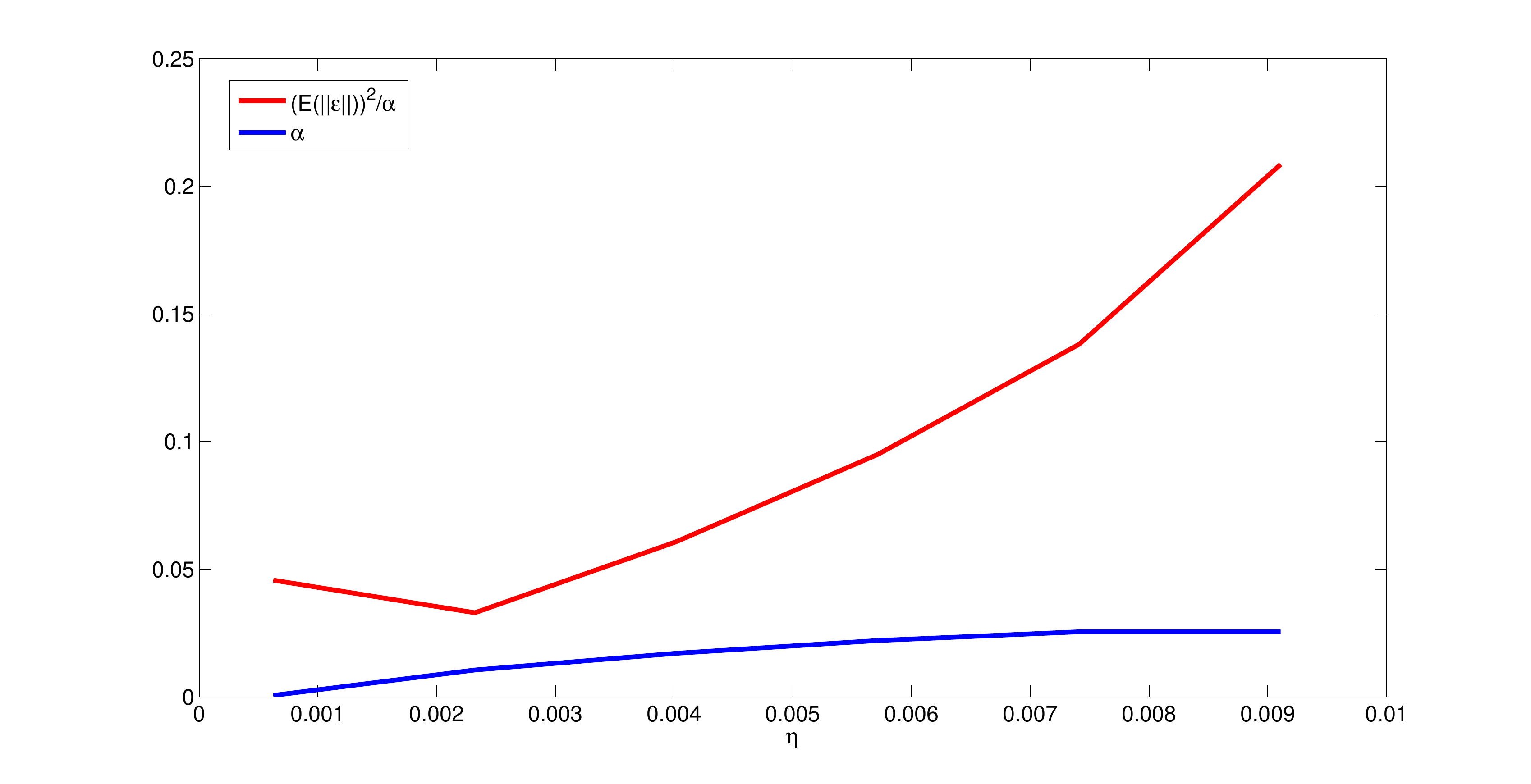}
\includegraphics[width=0.49\linewidth]{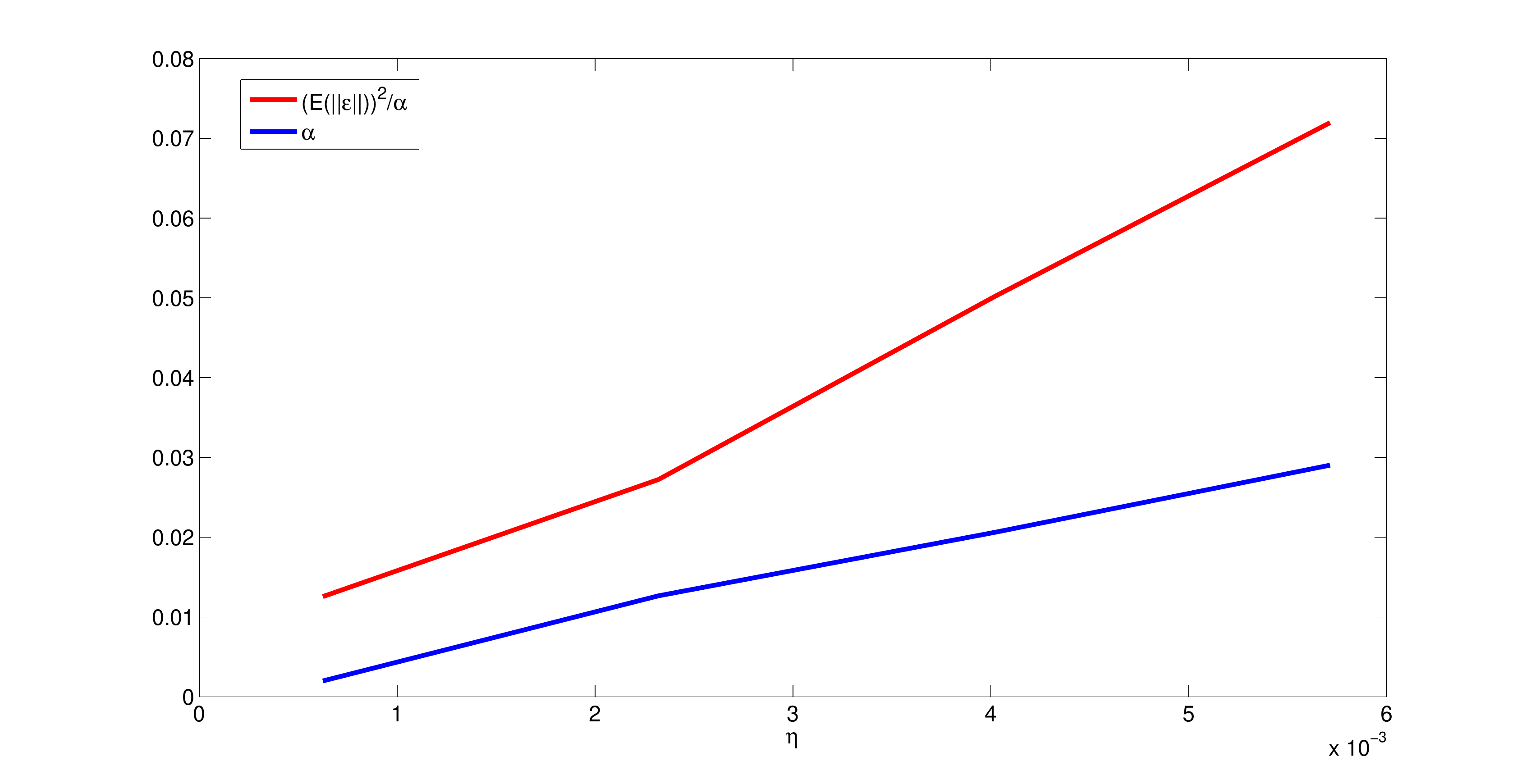}\end{center}\caption{Regularization error (red) and regularization parameter (blue) versus variance $\eta$. Left: constant value of $\tau$ in the discrepancy principle with the expectation of the noise leads to the regularization parameter decreasing too fast. Right: increasing $\tau$ appropriately with decreasing variance resolves this issue.}
\label{fig:conv_nonlin}\end{figure}

\subsection{Linear Inverse Problems with Besov-space prior}
In \cite{bayessparse2014} the lifting strategy was used in a slightly different way. In particular, the Ky Fan metric was used to obtain a novel parameter choice rule. The convergence rates obtained there, however, can also be viewed in the framework of this work. The scope of that paper was to transfer the deterministic convergence results from \cite{DDD04} into the stochastic setting. The seminal paper \cite{DDD04} initiated the investigation of sparsity-promoting regularization for Inverse Problems. Looking for the solution of the linear ill-posed problem 
\begin{equation}\label{eq:linear}
Ax=y
\end{equation}
between Hilbert spaces $\mathcal{X}$ and $\mathcal{Y}$ with given noisy data $y^\delta=y+\epsilon$, the regularization strategy was to obtain an approximation $x_\alpha^\delta$ to $x^\dag$ via
\begin{equation}\label{eq:func_det_ddd}
x_\alpha^\delta=\min_{x} ||Ax-y^\delta||_2^2+\sum_{\lambda\in\Lambda} w_\lambda |\langle x,\psi_\lambda \rangle|^p \psi_\lambda,
\end{equation}
where $\Lambda$ is an appropriate index set, $w_\lambda>0$ $\forall\lambda\in\Lambda$, $\{\psi_\lambda\}_{\lambda\in\Lambda}$ a dictionary (typically an orthonormal basis or frame) in $\mathcal{X}$ and $1\leq p\leq2$. Choosing a sufficiently smooth wavelet basis for $\{\psi_\lambda\}_{\lambda\in\Lambda}$ and setting $w_\lambda=2^{\zeta|\lambda|p}$ with $\zeta=s-d(\frac{1}{2}-\frac{1}{p})>0$, the penalty term in \eqref{eq:func_det_ddd} corresponds to a norm in the Besov space $B^s_{p,p}(\mathbb{R}^d)$. Formulating the problem of determining $x$ from noisy data $y^\eta=y+\epsilon$, $\epsilon\sim\mathcal{N}(0,\eta^2I_m)$, in the Bayesian setting with the distributions $\pi_\epsilon(y^\delta|x)\propto\exp(-\frac{1}{2\eta^2}||Ax-y^\delta||_2^2)$ and $\pi_{pr}(x)\propto\exp(-\frac{\tilde{\alpha}}{2}||x||^p_{B_{p,p}^s(\mathbb{R}^d)})$ and using the maximum a-posteriori solution lead to the formulation 
\begin{equation}
x^{\textup{MAP}}=\min_x ||Ax-y^\delta||_2^2+\tilde{\alpha}\eta^2 ||x||^p_{B^s_{p,p}(\mathbb{R}^d)}
\end{equation}
where $\eta$ is the variance of the noise and $\tilde{\alpha}$ can roughly be described as the inverse variance of the prior. The product $\alpha:=\tilde{\alpha}\eta^2$ gives the actual regularization parameter. In direct application of Theorem \ref{thm:lifting_convergence_kyfan}, the deterministic condition
\[
\alpha\rightarrow0,\quad \frac{\delta^2}{\alpha}\rightarrow0\textup{ as }\delta\rightarrow0,
\]
with $\delta$ replaced by $\rho_K(y,y^\eta)$ from \eqref{def:error_neupik} translates to the conditions
\[
\tilde{\alpha}\eta^2\rightarrow 0, \quad \frac{\log(\eta)}{\tilde{\alpha}}\rightarrow0\textup{ as }\eta\rightarrow0,
\]
leading to convergence of $x^{\textup{MAP}}$ to the unique (in case $p=1$ the operator is assumed to be injective) solution $x^\dag$ of minimal norm $||\cdot||_{B^s_{p,p}(\mathbb{R}^d)}$ in the Ky Fan metric. The proof of convergence rates is based on two assumption:
\[
C_l\sum_{\lambda\in\Lambda} 2^{-2|\lambda|\beta}|\langle x,\psi_\lambda\rangle|^p\leq ||Ax||\leq C_u\sum_{\lambda\in\Lambda} 2^{-2|\lambda|\beta}|\langle x,\psi_\lambda\rangle|^p
\]
where $\beta,C_l,C_u>0$ and
\[||x^\dag||_{B_{p,p}^s(\mathbb{R}^d)}\leq\rho
\]
for some $\rho>0$.
Combining Proposition 4.5, Proposition 4.6, Proposition 4.7 from \cite{DDD04} it is
\begin{equation}\label{eq:bayessparse_convrate_det}
||x_\alpha^\delta-x^\dag||\leq C\left( \delta+\sqrt{\delta^2+\alpha\rho^p} \right)^{\frac{\zeta}{\zeta+\beta}}\left(\rho+\left(\rho^p+\frac{\delta^2}{\alpha}\right)^{1/p}\right)^{\frac{\beta}{\zeta+\beta}}.
\end{equation}
Translated into the stochastic setting, the right hand side of \eqref{eq:bayessparse_convrate_det} reads
\begin{equation}\label{eq:ass1}
C\mathcal{E}(\eta,m,\tilde{\alpha})^{\frac{\zeta}{\zeta+\beta}}\tilde{\rho}^{\frac{\beta}{\zeta+\beta}}
\end{equation}
where with $L_m(\eta)=\min\{0,\eta^22\pi m^2(\frac{e}{2})^m\}$,
\begin{equation}\label{eq:mathcal_e}
\mathcal{E}(\eta,m,\tilde{\alpha}):=\eta \left(\sqrt{m-L_m(\eta)}+\sqrt{m-L_m(\eta)+\frac{\tilde{\alpha}\rho^p}{2}}\right)
\end{equation}
and
\[
\tilde{\rho}=\rho+\left(\rho^p+\frac{2m-L_m(\eta)}{\tilde{\alpha}}\right)^{1/p}.
\]
We know that the deterministic rate is an upper bound to the reconstruction error whenever $||y-y^\eta||=||\epsilon||\leq \rho_K(y,y^\eta)$ and $||x^\dag||_{B_{p,p}^s(\mathbb{R}^d)}\leq\rho$. Hence, it is
\begin{equation}\label{eq:bayessparse_convrate_stoch}
\mathbb{P}\left(||x^{\textup{MAP}}-x^\dag||\geq C\mathcal{E}(\eta,m,\tilde{\alpha})^{\frac{\zeta}{\zeta+\beta}}\tilde{\rho}^{\frac{\beta}{\zeta+\beta}}\right)\leq \frac{\Gamma( \frac{m}{2},m-L_m(\eta))}{\Gamma( \frac{m}{2})}+\frac{\Gamma(\frac{n}{p},\frac{\tilde{\alpha}\varrho^p}{2})}{\Gamma(\frac{n}{p})}
\end{equation}
where
\[
\mathbb{P}(||y-y^\eta||>\rho_K(y,y^\eta))=\frac{\Gamma( \frac{m}{2},m-L_m(\eta))}{\Gamma( \frac{m}{2})}.
\]
and
\[
\mathbb{P}(||x^\dag||_{B_{p,p}^s(\mathbb{R}^d)}\geq\rho)=\frac{\Gamma(\frac{n}{p},\frac{\tilde{\alpha}\varrho^p}{2})}{\Gamma(\frac{n}{p})}
\]
where the Besov-space functions were truncated after the first $n$ basis functions. By Definition of the Ky Fan metric, it follows immediately from \eqref{eq:bayessparse_convrate_stoch} that
\begin{equation}\label{eq:conv_rate_bayessparse_max}
\rho_K(x^{\textup{MAP}})=\max\left\lbrace C\mathcal{E}(\eta,m,\tilde{\alpha})^{\frac{\zeta}{\zeta+\beta}}\tilde{\rho}^{\frac{\beta}{\zeta+\beta}},\frac{\Gamma( \frac{m}{2},m-L_m(\eta))}{\Gamma( \frac{m}{2})}+\frac{\Gamma(\frac{n}{p},\frac{\tilde{\alpha}\varrho^p}{2})}{\Gamma(\frac{n}{p})}\right\rbrace.
\end{equation}
Since $\tilde{\alpha}$ is a free parameter, we can balance the terms in \eqref{eq:conv_rate_bayessparse_max}, i.e. solve the nonlinear equation
\[
C\mathcal{E}(\eta,m,\tilde{\alpha})^{\frac{\zeta}{\zeta+\beta}}\tilde{\rho}^{\frac{\beta}{\zeta+\beta}}=\frac{\Gamma( \frac{m}{2},m-L_m(\eta))}{\Gamma( \frac{m}{2})}+\frac{\Gamma(\frac{n}{p},\frac{\tilde{\alpha}\varrho^p}{2})}{\Gamma(\frac{n}{p})}
\]
for $\tilde{\alpha}$. With this parameter choice rule one obtains by construction 
\begin{equation}\label{eq:conv_rate_bayessparse_max2}
\rho_K(x^{\textup{MAP}})=\mathcal{O}(\mathcal{E}(\eta,m,\tilde{\alpha})^{\frac{\zeta}{\zeta+\beta}}\tilde{\rho}^{\frac{\beta}{\zeta+\beta}}).
\end{equation}

We can also apply the theory developed in this work to this problem. In the deterministic setting, see \cite{DDD04}, it was proposed to chose the regularization parameter $\alpha=\delta^2/\varrho^p$. Combining \cite[Proposition 4.5]{DDD04} and \cite[Proposition 4.7]{DDD04} then yields the rate
\[
||x_\alpha^\delta-x^\dag||\leq C \left(\frac{\delta}{C_l} \right)^{\frac{\varsigma}{\varsigma+\beta}}\varrho^{\frac{\beta}{\varsigma+\beta}}
\]
with $C_l$ from \eqref{eq:ass1} and some $C>0$. Theorem \ref{thm:lifting_rates_kyfan} then yields in the stochastic setting the parameter choice
$\alpha\sim\rho_K(y^\eta,y)^2/\varrho^p$ and
\begin{equation}\label{eq:conv_rate_bayes_sparse_lifted}
\rho_K(x_\alpha^\eta,x^\dag)=\mathcal{O}\left(\left(\frac{\rho_K(y^\eta,y)}{C_l} \right)^{\frac{\varsigma}{\varsigma+\beta}}\varrho^{\frac{\beta}{\varsigma+\beta}}\right).
\end{equation}
In the notation of \eqref{eq:mathcal_e} it is for Gaussian noise $\epsilon\sim\mathcal{N}(0,\eta^2 I_m)$
\begin{equation}\label{eq:rho_K_e}
\rho_K(y^\eta,y)\leq \eta\sqrt{m-L_m(\eta)},
\end{equation}
see Proposition \eqref{prop:kyfandistance}. Since $\rho_K(y^\eta,y)<\mathcal{E}(\eta,m,\tilde{\alpha})$, compare \eqref{eq:mathcal_e} and \eqref{eq:rho_K_e}, the convergence rate in \eqref{eq:conv_rate_bayessparse_max2} is slightly slower than the one in \eqref{eq:conv_rate_bayes_sparse_lifted}, but they share the same order of convergence.

\section*{Conclusions}
Our goal was to demonstrate how convergence results for Inverse Problems in the deterministic setting can be carried over into the stochastic setting. Using the Ky Fan metric, we have shown that, when only the noise is assumed to be stochastic whereas the other quantities are deterministic, this is is possible in a straight-forward way. Namely, assuming the knowledge of an estimate of $\rho_K(y,y^\eta)$, the convergence results and parameter choice follows from the deterministic setting by replacing $\delta$, which originates from the basic deterministic assumption $||y-y^\delta||~\leq~\delta$, with $\rho_K(y,y^\delta)$. We have shown that, under some slight modifications, it is possible to use the expectation as measure for the magnitude of the noise. In a fully stochastic situation, where additionally to the noise other objects might be of stochastic nature, the lifting of deterministic convergence results is possible as well. However, careful analysis is necessary in order to carry the deterministic conditions over into the stochastic setting.


\end{document}